\newtheorem{theorem}{Theorem}[section]
\newtheorem{lemma}[theorem]{Lemma}
\newtheorem{proposition}[theorem]{Proposition}
\newtheorem{corollary}[theorem]{Corollary}
\newtheorem{definition}[theorem]{Definition}
\newtheorem{remark}[theorem]{Remark}
\begin{document}
\date{}
\title{\bf Spectrum of conjugacy and order super commuting graphs of some finite groups}
\author{Sandeep Dalal \and Sanjay Mukherjee\footnote{Supported by CSIR Grant No. 09/1248(0004)/2019-EMR-I, MHRD, Government of India} \and Kamal Lochan Patra\footnote{Partially supported by Project No. MTR/2022/000424 of SERB, Government of India}}
\maketitle
\begin{abstract}
Let $\Gamma$ be a simple finite graph with vertex set $V(\Gamma)$ and edge set $E(\Gamma)$. Let $\mathcal{R}$ be an equivalence relation on $V(\Gamma)$. The $\mathcal{R}$-super $\Gamma$ graph $\Gamma^{\mathcal{R}}$ is a simple graph with vertex set $V(\Gamma)$ and two distinct vertices are adjacent if either they are in the same $\mathcal{R}$-equivalence class or there are elements in their respective $\mathcal{R}$-equivalence classes that are adjacent in the original graph $\Gamma$. We first show that $\Gamma^{\mathcal{R}}$ is a generalized join of some complete graphs and using this we obtain the adjacency and Laplacian spectrum of conjugacy and order super commuting graphs of dihedral group $D_{2n}\; (n\geq 3)$, generalized quaternion group $Q_{4m} \;(m\geq 2)$ and the nonabelian group $\mathbb Z_p \rtimes \mathbb Z_q$ of order $pq$, where $p$ and $q$ are distinct primes with $q|p-1$. 
\end{abstract}

\noindent {\bf Key words:}  Adjacency matrix, Commuting graph, Conjugacy class, Generalized join of graphs, Laplacian matrix \\
{\bf AMS subject classification.} 05C25, 05C50, 20E45

\section{Introduction}
Throughout this paper, graphs are finite, simple and undirected. Let $\Gamma$ be a graph with vertex set $V(\Gamma)$ and edge set $E(\Gamma)$. For $u,v\in V(\Gamma)$, we say $u$ is \emph{adjacent} to $v$ if there is an edge between $u$ and $v$, and we denote it by $u \sim v$ (or $uv \in E(\Gamma)$). By degree of a vertex $v$ in $\Gamma$, we mean the number of edges incident with $v$ and we denote it as deg$(v)$. A vertex $v$ is called a pendant vertex if deg$(v)=1$. A vertex $v$ is called a dominant vertex of $\Gamma$ if it is adjacent with all other vertices of $\Gamma$. We denote the complete graph on $n$ vertices by $K_n$ and the star graph on $m+1$ vertices by $K_{1,m}$. We refer to \cite{We} for other unexplained graph theoretic terminologies used in this paper.

For two graphs $\Gamma_1$ and $\Gamma_2$ with disjoint vertex sets, the \emph{join graph} $\Gamma_1 \vee \Gamma_2$ of $\Gamma_1$ and $\Gamma_2$  is the graph obtained from the union of $\Gamma_1$ and $\Gamma_2$ by adding new edges from each vertex of $\Gamma_1$ to every vertex of $\Gamma_2.$ The following is a generalization of the definition of join graph (which is called generalized composition graph in \cite{Sc}):

\begin{definition}
Let  $\Im$ be a graph on $k$ vertices with $V(\Im) = \{v_1,v_2, \ldots, v_k\}$ and let $\Gamma_1, \Gamma_2, \ldots, \Gamma_k$ be pairwise vertex disjoint graphs. The $\Im$-generalized join graph $\Im [\Gamma_1, \Gamma_2, \ldots, \Gamma_k]$ of $\Gamma_1, \Gamma_2, \ldots, \Gamma_k$ is the graph 
formed by replacing each vertex $v_i$ of $\Im$ by the graph $\Gamma_i$ and then joining each vertex of $\Gamma_i$ to every vertex of $\Gamma_j$ whenever $v_i \sim v_j$ in $\Im$.
\end{definition}
Note that $K_2[\Gamma_1,\Gamma_2]$ coincides with the usual join $\Gamma_1 \vee \Gamma_2$ of $\Gamma_1$ and $\Gamma_2$. The following result is useful and tells about the connectedness of $\Im [\Gamma_1, \Gamma_2, \ldots, \Gamma_k]$.

\begin{lemma} [\cite{CHA}, Lemma 2.2]\label{Gj}
Let  $\Im$ be a graph on $k$ vertices with $V(\Im) = \{v_1,v_2, \ldots, v_k\}$ and let $\Gamma_1,\Gamma_2, \ldots, \Gamma_k$ be pairwise vertex disjoint graphs. If the $\Im$-generalized join graph $\Im [\Gamma_1, \Gamma_2, \ldots, \Gamma_k]$ is connected then $\Im$ is connected. Conversely, if $k\geq 2$ and $\Im$ is connected then $\Im [\Gamma_1, \Gamma_2, \ldots, \Gamma_k]$ is connected.
\end{lemma}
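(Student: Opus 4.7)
The plan is to handle the two directions of the equivalence separately. For the forward direction, I would argue by contrapositive: assume $\Im$ is disconnected and deduce that $\Im[\Gamma_1,\ldots,\Gamma_k]$ must also be disconnected. Write $V(\Im)=A\sqcup B$ with $A,B$ nonempty and no edge of $\Im$ running between $A$ and $B$. By the construction of the generalized join, a cross-edge between $\bigcup_{v_i\in A}V(\Gamma_i)$ and $\bigcup_{v_j\in B}V(\Gamma_j)$ would arise only if some $v_i\in A$ is adjacent to some $v_j\in B$ in $\Im$, contradicting the choice of $A$ and $B$. Hence this partition witnesses the disconnectedness of the generalized join.

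For the converse, assume $\Im$ is connected with $k\geq 2$ and pick any two vertices $u,w$ of the generalized join, say $u\in V(\Gamma_i)$ and $w\in V(\Gamma_j)$. When $i\neq j$, I would take a path $v_i=v_{i_0},v_{i_1},\ldots,v_{i_\ell}=v_j$ in $\Im$ and, for each intermediate index $t$, pick an arbitrary representative $x_t\in V(\Gamma_{i_t})$; then $u,x_1,x_2,\ldots,x_{\ell-1},w$ is a walk in $\Im[\Gamma_1,\ldots,\Gamma_k]$, since by definition consecutive terms lie in $\Im$-adjacent blocks and are therefore joined by an edge. When $i=j$, I would invoke the hypothesis $k\geq 2$ together with connectedness of $\Im$ to locate a neighbor $v_h$ of $v_i$; for any $x\in V(\Gamma_h)$, the path $u-x-w$ does the job.

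The argument is essentially a dictionary between paths in $\Im$ and paths in $\Im[\Gamma_1,\ldots,\Gamma_k]$, so no serious obstacle arises. The only delicate point is the same-block case $i=j$, which explains why the assumption $k\geq 2$ is required: without it, $\Im$ could consist of a single isolated vertex and the connectedness of $\Im[\Gamma_1]=\Gamma_1$ would have to be imposed as an extra hypothesis, since nothing in the construction forces an individual $\Gamma_i$ to be connected.
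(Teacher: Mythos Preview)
The paper itself does not prove this lemma; it is quoted from \cite{CHA} (Lemma~2.2 there) and stated without argument, so there is no in-paper proof to compare against. Your proof is correct and is the natural one: the contrapositive for the forward direction (a vertex bipartition of $\Im$ with no crossing edges lifts to a bipartition of the generalized join with no crossing edges), and for the converse, lifting a path in $\Im$ to a walk through representatives of the blocks, with the hypothesis $k\geq 2$ invoked exactly where it is needed---to supply a neighbouring block when $u$ and $w$ lie in the same $\Gamma_i$.
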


Graphs defined on groups have a long history. Many graphs are defined with vertex set being a group $G$ and the edge set reflecting the  structure of $G$ in some way, for example, Cayley graph, commuting graph, power graph, prime graph, intersection graph etc. For more on different graphs defined on groups, we refer to the survey paper \cite{Ca}. Add to this study, Arunkumar \emph{et al.} in \cite{AC}, introduced the notion of super graphs on groups. Let $G$ be a finite group and let $\mathcal{R}$ be an equivalence relation defined on $G$. For $g\in G$, let $[g]_{\mathcal{R}}$ be the $\mathcal{R}$-equivalence class of $g$ in $G$. Let $\Gamma$ be a graph with $V(\Gamma)=G$. The $\mathcal{R}$-super $\Gamma$ graph is the simple graph with vertex set $G$ and two vertices $g$ and $h$ are adjacent if and only if there exists $g'\in [g]_{\mathcal{R}}$ and $h'\in [h]_{\mathcal{R}}$ such that $g'$ and $h'$ are adjacent in the graph $\Gamma$. It is also assumed that the subgraph induced by the vertices of  $[g]_{\mathcal{R}}$ in the $\mathcal{R}$-super $\Gamma$ graph is complete and the reason has been discussed in \cite{AC} (see Section 1.2). In this article we have extended this notion of super graph on groups to any simple graph $\Gamma$ over an equivalence relation on $V(\Gamma)$.

The paper is organized as follows: In Section $2$, some known  results related to different matrices associated with graphs are discussed. In Section $3$, the $\mathcal{R}$-super graph of a graph is defined and its adjacency and Laplacian characteristic polynomials are studied. In Section $4$, we determine the adjacency and Laplacian spectrum of conjugacy and order super commuting graphs of some classes of nonabelian groups.
 
\section{Preliminaries}

Study of graph spectra is one of the most prevalent quests in graph theory. It has applications in various subjects like biology, chemistry, physics, economics, computer science, information and communication technologies, for example see \cite{Tr,VM} and references therein. Different approaches have been considered to study spectral properties of several graphs, one can refer to \cite{Cv} for a comprehensive study of the literature of graph spectra. 

There are several matrices defined on a graph. Among these, the spectrum of adjacency and Laplacian  matrices are studied most frequently. Let $\Gamma$ be a finite simple graph with vertex set $V(\Gamma)=\{v_1,v_2,\ldots,v_n\}$. The \emph{adjacency matrix} of $\Gamma$ is the $n\times n$ matrix $A(\Gamma)=(a_{ij})$  where $a_{ij}=1$ if $v_i \sim v_j$ and $0$ otherwise.  The \emph{Laplacian matrix} $L(\Gamma)$ of $\Gamma$ is defined as $L(\Gamma)=D(\Gamma) - A(\Gamma),$ where $D(\Gamma)={\rm diag}(d_1, d_2, \ldots, d_n)$ with $d_i=deg(v_i)$, $i = 1, 2, \ldots, n$. It is well known that $L(\Gamma)$ is symmetric and positive semidefinite with the smallest eigenvalue $0$. For more on adjacency matrices of graphs, we refer to the book, \cite{CRS,Cv} and for Laplacian matrices of graphs, we refer to the survey papers \cite{RM} and \cite{BM}.
 
Let $B$ be a square matrix. The spectrum of $B$, denoted by $\sigma(B)$, is the multiset of all the eigenvalues of B. If $\lambda_1 \leq \lambda_2 \leq \cdots \leq \lambda_r$ be the distinct eigenvalues of  $B$  with multiplicities $m_1, m_2, \ldots, m_r$, respectively then we shall denote the spectrum of $B$ by   
$\displaystyle \sigma(B)=\begin{pmatrix}
\lambda_1 & \lambda_2 & \cdots& \lambda_r\\
m_1 & m_2 & \cdots & m_r\\
\end{pmatrix}$. It is known that $\displaystyle \sigma(A(K_n))=\begin{pmatrix}
-1 & n-1\\
n-1 & 1 \\
\end{pmatrix}$ and $\displaystyle \sigma(L(K_n))=\begin{pmatrix}
0 & n \\
1 & n-1\\
\end{pmatrix}$.
	
By $\chi(B,x)$ and det$(B)$, we  mean the characteristic polynomial and the determinant of  $B$, respectively. We denote the square matrix of order $n$ having all the entries $1$ by $J_n.$ A principal submatrix of $B$ is a matrix obtained from $B$ by deleting some rows and corresponding columns. The following theorem provides an important relation between the eigenvalues of an $n\times n$ symmetric matrix and its principal submatrices.
	
\begin{theorem}[{\rm\cite{HJ}, Interlacing Theorem}]
Let $B$ be an $n\times n$ real symmetric matrix  and let $m$ be an integer with $1\leq m \leq n.$ Let $B_m$ be an $m\times m$ principal submatrix of $B$. Suppose $\lambda_1 \leq \cdots \leq \lambda_n$ are the eigenvalues of $B$ and $\beta_1 \leq \cdots \leq \beta_m$ are the eigenvalues of $B_m$. Then \[\lambda_k \leq \beta_k \leq \lambda_{k + n -m} ~ \text{for} ~k=1, 2 \dots, m.\]		
In particular if $m = n-1,$ then
		\[\lambda_1 \leq \beta_1 \leq \lambda_{2}\leq \beta_2 \leq \lambda_{3} \leq \cdots \leq \beta_{n - 1} \leq \lambda_{n}.\]
\end{theorem}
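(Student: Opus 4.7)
The plan is to prove the general bound $\lambda_k \le \beta_k \le \lambda_{k+n-m}$ directly by a single application of the Courant-Fischer min-max principle, rather than inducting on $n-m$. The key observation is that the principal submatrix $B_m$ acts on a specific $m$-dimensional coordinate subspace $V \subseteq \mathbb{R}^n$, so its eigenvalues are min-maxes of the same Rayleigh quotient $x^\top B x / x^\top x$ but restricted to subspaces of $V$; comparing this with the unrestricted Courant-Fischer formula for $B$ produces both inequalities almost immediately. The ``in particular'' statement for $m=n-1$ is then the direct specialisation.

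Concretely, let $I \subseteq \{1,\ldots,n\}$ index the $m$ rows and columns retained in $B_m$, and put $V = \{x \in \mathbb{R}^n : x_j = 0 \text{ for } j \notin I\}$, so that $\dim V = m$ and $x^\top B x = y^\top B_m y$ whenever $y$ is the restriction of $x \in V$ to its nonzero coordinates. Courant-Fischer gives
\[
\lambda_k = \min_{\substack{S \subseteq \mathbb{R}^n \\ \dim S = k}} \max_{0 \ne x \in S} \frac{x^\top B x}{x^\top x}, \qquad \beta_k = \min_{\substack{T \subseteq V \\ \dim T = k}} \max_{0 \ne x \in T} \frac{x^\top B x}{x^\top x}.
\]
The bound $\lambda_k \le \beta_k$ is immediate, since the feasible region defining $\beta_k$ is a subcollection of that defining $\lambda_k$. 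For the upper bound I would pick a $(k+n-m)$-dimensional subspace $S$ realising the min-max for $\lambda_{k+n-m}$; the dimension formula gives $\dim(S \cap V) \ge (k+n-m)+m-n = k$, so any $k$-dimensional subspace $T \subseteq S \cap V$ satisfies
\[
\beta_k \le \max_{0 \ne x \in T} \frac{x^\top B x}{x^\top x} \le \max_{0 \ne x \in S} \frac{x^\top B x}{x^\top x} = \lambda_{k+n-m}.
\]

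The main technical point I expect is the dimension-counting step guaranteeing $\dim(S \cap V) \ge k$; this is what transfers the Courant-Fischer upper bound from $B$ to $B_m$, and the rest is notation. The proof uses Courant-Fischer as a black box, which I would treat as a standard prerequisite (itself proved by orthogonally diagonalising $B$ and examining the Rayleigh quotient on subspaces spanned by eigenvectors).
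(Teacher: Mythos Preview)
Your proof is correct and is in fact the standard Courant--Fischer argument for Cauchy interlacing. However, note that the paper does not supply its own proof of this statement: the theorem is quoted from Horn and Johnson \cite{HJ} as a background result in the preliminaries section, with no argument given. So there is nothing to compare against on the paper's side.

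That said, your argument stands on its own. The lower bound $\lambda_k \le \beta_k$ is exactly the observation that restricting the feasible set of $k$-dimensional subspaces to those contained in $V$ can only increase the minimum. The upper bound is handled cleanly by the dimension formula $\dim(S \cap V) \ge \dim S + \dim V - n$, which guarantees a $k$-dimensional test subspace inside both $S$ and $V$. One small point: the minimum in Courant--Fischer is attained (since the Grassmannian of $k$-planes is compact and the inner max is continuous), so your phrase ``pick a subspace $S$ realising the min-max'' is justified; alternatively one can run the same argument with an $\varepsilon$-approximate minimiser and let $\varepsilon \to 0$. Either way the proof is complete.
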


To study the characteristic polynomials of different matrices associated with a graph simultaneously, the generalised characteristic polynomial is introduced in \cite{Cv}. 	The \emph{generalized characteristic polynomial} of the graph $\Gamma$ is a bivariate polynomial, defined as  \[\phi_{\Gamma}(x, t) = {\rm det} (xI_n - (A(\Gamma) - tD(\Gamma))).\] The following remark is straightforward.
	
\begin{remark}\label{rem}
Let $\Gamma$ be a graph on  $n$ vertices. Then
		
\begin{itemize}
\item $\chi(A(\Gamma), x) = \phi_{\Gamma}(x, 0);$
			
\item $\chi(L(\Gamma), x) = (-1)^n \phi_{\Gamma}(-x, 1).$			
\end{itemize}
\end{remark} 

The next result tells about the generalized characteristic polynomial of generalized join of a family of regular graphs, which is very useful for the study of the spectrum of super graphs defined on groups. 

\begin{theorem}[{\rm\cite[Theorem 3.1]{Ch}}]\label{Gen-Ch-Poly}
Let $\Im$ be a connected graph with $V(\Im)=\{v_1,v_2,\ldots,v_k\}$ and let $\Gamma=\Im [\Gamma_1, \Gamma_2, \ldots, \Gamma_k]$ be the $\Im$-generalized join graph of  $\Gamma_1, \Gamma_2, \ldots, \Gamma_k$, where $\Gamma_i$ is an $r_i$-regular graph with $n_i$ vertices for $i = 1, \ldots, k$. Then \[\phi_{\Gamma}(x, t) = \chi(N(t), x) \prod\limits_{i = 1}^{k} \frac{\phi_{\Gamma_i}(x + tN_i, t)}{x - r_i + t(r_i + N_i)}, \]
		
		where  $N(t) = \displaystyle \begin{bmatrix}
			r_1 -t(r_1 + N_1) & \sqrt{n_1n_2}\rho_{12} & \cdots & \sqrt{n_1n_k}\rho_{1k}   \\
			\sqrt{n_1n_2}\rho_{12} & r_2 -t(r_2 + N_2) & \cdots & \sqrt{n_2n_k}\rho_{2k}\\
			\; \; \vdots & \; \; \vdots& \ddots & \; \; \vdots \\ 
			\sqrt{n_1n_k}\rho_{1k}  & \sqrt{n_2n_k}\rho_{2k} & \cdots  & r_k -t(r_k + N_k) 
		\end{bmatrix}$ with \\ 
		
		\vspace{0.5cm}
        $N_i = \sum\limits_{v_iv_j \in E(\Im)} n_j$ and  
 		$\rho_{ij} = \rho_{ji} =  \left\{ \begin{array}{ll}
			1 & \mbox{$v_iv_j \in E(\Im)$,}\\
			0& \mbox{otherwise.}\end{array} \right.$ 
\end{theorem}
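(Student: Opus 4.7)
The plan is to work directly with the block decomposition of $M := A(\Gamma) - tD(\Gamma)$ induced by the vertex partition $V(\Gamma) = V(\Gamma_1) \sqcup \cdots \sqcup V(\Gamma_k)$. Because each $\Gamma_i$ is $r_i$-regular and, in $\Gamma$, every vertex of $\Gamma_i$ acquires exactly $N_i = \sum_{v_iv_j \in E(\Im)} n_j$ new neighbors (one for each vertex of every $\Gamma_j$ with $v_j \sim v_i$), the $i$-th diagonal block of $M$ is $A(\Gamma_i) - t(r_i + N_i)I_{n_i}$ and the $(i,j)$-th off-diagonal block is $\rho_{ij}\, J_{n_i \times n_j}$. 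The whole proof will consist of diagonalizing the coupling between these blocks.

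First I would choose, for each $i$, an orthonormal basis of $\mathbb{R}^{n_i}$ whose first vector is $\tfrac{1}{\sqrt{n_i}}\mathbf{1}_{n_i}$; this is possible and convenient because $r_i$-regularity of $\Gamma_i$ makes $\mathbf{1}_{n_i}$ an eigenvector of $A(\Gamma_i)$ with eigenvalue $r_i$. In this basis, $A(\Gamma_i)$ is similar to the direct sum of the scalar $r_i$ and a matrix $\tilde{A}_i$ of order $n_i-1$ whose characteristic polynomial is $\chi(A(\Gamma_i),x)/(x-r_i)$. Crucially, each all-ones block $J_{n_i \times n_j}$ transforms, under this orthonormal change of basis, into a matrix whose only nonzero entry is $\sqrt{n_i n_j}$ in the $(1,1)$ position, since $\tfrac{1}{\sqrt{n_i}}\mathbf{1}_{n_i}^\top J_{n_i \times n_j}\tfrac{1}{\sqrt{n_j}}\mathbf{1}_{n_j} = \sqrt{n_i n_j}$ while all other inner products vanish by orthogonality to $\mathbf{1}$.

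After reordering coordinates so that the $k$ ``all-ones directions'' come first and the $\sum (n_i-1)$ ``orthogonal directions'' come after (grouped by $i$), the transformed $M$ becomes block diagonal: the top-left $k\times k$ block is precisely $N(t) - xI_k$ shifted correctly, with $(i,i)$-entry $r_i - t(r_i+N_i)$ and $(i,j)$-entry $\sqrt{n_i n_j}\rho_{ij}$; the remaining block is the direct sum over $i$ of $\tilde{A}_i - t(r_i+N_i)I_{n_i-1}$. Taking determinants of $xI - M$ gives
\[
\phi_\Gamma(x,t) = \chi(N(t),x) \prod_{i=1}^{k} \det\bigl( xI_{n_i-1} - \tilde{A}_i + t(r_i+N_i)I_{n_i-1}\bigr).
\]

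The final step, which is mostly bookkeeping, is to identify the product on the right with the claimed expression. Since $D(\Gamma_i) = r_i I_{n_i}$, a direct computation gives $\phi_{\Gamma_i}(x+tN_i,t) = \det\bigl((x+t(r_i+N_i))I_{n_i} - A(\Gamma_i)\bigr)$, which factors (using the decomposition $A(\Gamma_i) \simeq r_i \oplus \tilde{A}_i$) as $\bigl(x - r_i + t(r_i+N_i)\bigr)\,\det\bigl(xI_{n_i-1} - \tilde{A}_i + t(r_i+N_i)I_{n_i-1}\bigr)$. Substituting this identity yields the stated formula. The only subtle point, and the one place I expect to need care, is the normalization that produces the $\sqrt{n_i n_j}$ factors in $N(t)$: it is essential to use an \emph{orthonormal} (not merely orthogonal) change of basis in the first step, so that the similarity transform is by an orthogonal matrix and the determinant of $xI - M$ is preserved.
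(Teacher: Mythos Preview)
Your argument is correct. The block structure of $A(\Gamma)-tD(\Gamma)$ is exactly as you describe, the orthonormal change of basis with first vector $\tfrac{1}{\sqrt{n_i}}\mathbf{1}_{n_i}$ simultaneously diagonalizes the rank-one off-diagonal blocks and splits off the Perron eigenvector of each $A(\Gamma_i)$, and your bookkeeping identifying $\phi_{\Gamma_i}(x+tN_i,t)$ with $\bigl(x-r_i+t(r_i+N_i)\bigr)\det\bigl((x+t(r_i+N_i))I_{n_i-1}-\tilde{A}_i\bigr)$ is accurate. The caution you flag about using an \emph{orthonormal} rather than merely orthogonal basis is well placed: it is precisely what forces the off-diagonal entries of the reduced $k\times k$ matrix to be $\sqrt{n_in_j}\,\rho_{ij}$ rather than some other scaling.

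As for comparison with the paper: there is nothing to compare. The paper does not prove this theorem; it is quoted verbatim as \cite[Theorem~3.1]{Ch} and used as a black box. Your proof is essentially the standard one (and is, in outline, the argument of Chen and Chen in \cite{Ch}), so you have not taken a different route from the original source either.
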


\section{$\mathcal{R}$-super graph of a graph}

Let $X$ be a nonempty set and let $\mathcal{R}$ be an equivalence relation on $X$. For $x\in X$, we denote the $\mathcal{R}$-equivalence class of $x$  by $[x]_{\mathcal{R}}$. Let $\mathcal{R}_1$ and $\mathcal{R}_2$ be two equivalence relations on $X$. We say $\mathcal{R}_1$ is contained in $\mathcal{R}_2$, denoted by $\mathcal{R}_1\subseteq \mathcal{R}_2$ if  $[x]_{\mathcal{R}_1}\subseteq [x]_{\mathcal{R}_2}$ for any $x\in X$. Let $T_X$ be the set of all equivalence relations on $X$. Then $(T_X, \; \subseteq)$ is a partially ordered set with $\mathcal{R}_l$ and $\mathcal{R}_g$ as the least and the greatest elements respectively, where for any $x\in X$, $[x]_{_{\mathcal{R}_l}}=\{x\}$ and $[x]_{_{\mathcal{R}_g}}=X$. For a given graph $\Gamma$, we will now define some graphs based on the equivalence relation on $V(\Gamma)$.

Let $\Gamma$ be a graph and let $\mathcal{R}$ be an equivalence relation on $V(\Gamma)$. Let $C_1, C_2, \ldots, C_k$ be the distinct $\mathcal{R}$-equivalence classes of $V(\Gamma)$ with $|C_i|=n_i$, for $1\leq i \leq k$. The $\mathcal{R}$-compressed $\Gamma$ graph $\Im_{_{\Gamma^{\mathcal{R}}}}$ is a simple graph with $V(\Im_{_{\Gamma^{\mathcal{R}}}})=\{C_1, C_2, \ldots, C_k\}$ and two distinct vertices $C_i$ and $C_j$ are adjacent if there exist $x \in C_i$ and  $y \in C_j$ such that $x$ is adjacent to $y$ in $\Gamma$. We now define a super graph of $\Gamma$ which is a generalization of super graphs defined on groups. 

The $\mathcal{R}$-super $\Gamma$ graph $\Gamma^{\mathcal{R}}$ is a simple graph with vertex set $V(\Gamma)$ and two distinct vertices are adjacent if either they are in the same $\mathcal{R}$-equivalence class or there are elements in their respective $\mathcal{R}$-equivalence classes that are adjacent in the original graph $\Gamma$. 

Note that if $[v]_{\mathcal{R}}=\{v\}$ for any $v\in V(\Gamma)$ then $\Gamma^{\mathcal{R}}=\Gamma$ and if $[v]_{\mathcal{R}}=V(\Gamma)$ for any $v\in V(\Gamma)$ then $\Gamma^{\mathcal{R}}$ is a complete graph with vertex set $V(\Gamma)$. We have the following results on $\Gamma^{\mathcal{R}}$.
\begin{proposition}\label{poset}
Let $\Gamma$ be a graph and let $\mathcal{R}_1$ and $\mathcal{R}_2$ be two equivalence relations on $V(\Gamma)$. If $\mathcal{R}_1\subseteq \mathcal{R}_2$  then $\Gamma^{\mathcal{R}_1}$ is a spanning subgraph of  $\Gamma^{\mathcal{R}_2}$.
\end{proposition}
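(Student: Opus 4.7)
The plan is a direct verification: since $\Gamma^{\mathcal{R}_1}$ and $\Gamma^{\mathcal{R}_2}$ share the same vertex set $V(\Gamma)$, the spanning condition reduces to showing $E(\Gamma^{\mathcal{R}_1}) \subseteq E(\Gamma^{\mathcal{R}_2})$. I would pick an arbitrary edge $uv$ of $\Gamma^{\mathcal{R}_1}$ and, using the two clauses of the definition of the $\mathcal{R}$-super $\Gamma$ graph, produce the edge $uv$ in $\Gamma^{\mathcal{R}_2}$.

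First, I would record the only tool needed: by hypothesis $\mathcal{R}_1 \subseteq \mathcal{R}_2$, so $[x]_{\mathcal{R}_1} \subseteq [x]_{\mathcal{R}_2}$ for every $x \in V(\Gamma)$. Then I split into two cases according to how the adjacency $u \sim v$ in $\Gamma^{\mathcal{R}_1}$ arises. In the first case, $u$ and $v$ lie in a common $\mathcal{R}_1$-class, i.e.\ $v \in [u]_{\mathcal{R}_1} \subseteq [u]_{\mathcal{R}_2}$, so they also lie in a common $\mathcal{R}_2$-class and are therefore adjacent in $\Gamma^{\mathcal{R}_2}$. In the second case, there exist $u' \in [u]_{\mathcal{R}_1}$ and $v' \in [v]_{\mathcal{R}_1}$ with $u' \sim v'$ in $\Gamma$; containment gives $u' \in [u]_{\mathcal{R}_2}$ and $v' \in [v]_{\mathcal{R}_2}$, and the same witnesses $u',v'$ certify adjacency of $u$ and $v$ in $\Gamma^{\mathcal{R}_2}$.

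There is no real obstacle here: the statement is essentially a monotonicity remark about the construction, and both clauses of the adjacency rule are visibly monotone in the equivalence relation. The proof is two or three lines once the case split is set up, so I would present it compactly without further fanfare.
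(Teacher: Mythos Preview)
Your proposal is correct and essentially identical to the paper's own proof: both note that the vertex sets agree and then verify $E(\Gamma^{\mathcal{R}_1})\subseteq E(\Gamma^{\mathcal{R}_2})$ by the same two-case split (same equivalence class versus existence of adjacent witnesses), using the containment $[x]_{\mathcal{R}_1}\subseteq [x]_{\mathcal{R}_2}$ in each case.
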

\begin{proof} From the construction of super $\Gamma$ graph, it is clear that $V(\Gamma^{\mathcal{R}_1})=V(\Gamma^{\mathcal{R}_2})=V(\Gamma)$. Let $x,y \in V(\Gamma)$ and suppose $x \sim y$ in $\Gamma^{\mathcal{R}_1}$. If $[x]_{_{\mathcal{R}_1}}=[y]_{_{\mathcal{R}_1}}$ then $[x]_{_{\mathcal{R}_2}}=[y]_{_{\mathcal{R}_2}}$ as $\mathcal{R}_1\subseteq \mathcal{R}_2$ and so $x \sim y$ in $\Gamma^{\mathcal{R}_2}$. If $[x]_{_{\mathcal{R}_1}}\neq [y]_{_{\mathcal{R}_1}}$ then there exist $x'\in [x]_{_{\mathcal{R}_1}}$ and $y'\in [y]_{_{\mathcal{R}_1}}$ such that $x'\sim y'$ in $\Gamma$. As $\mathcal{R}_1\subseteq \mathcal{R}_2$ so $x'\in [x]_{_{\mathcal{R}_2}}$ and $y'\in [y]_{_{\mathcal{R}_2}}$ and hence $x \sim y$ in $\Gamma^{\mathcal{R}_2}$. This proves the result. 
\end{proof}
\begin{theorem}\label{obs}
Let $\Gamma$ be a graph and let $\mathcal{R}$ be an equivalence relation on $V(\Gamma)$. Let $C_1, C_2, \ldots, C_k$ be the distinct $\mathcal{R}$-equivalence classes of $V(\Gamma)$ with $|C_i|=n_i$ for $1\leq i \leq k$. Then  $\Gamma^{\mathcal{R}}$ is isomorphic to $\Im_{_{\Gamma^{R}}}[K_{n_1}, K_{n_2},\ldots, K_{n_k}].$
\end{theorem}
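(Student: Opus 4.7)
The plan is to exhibit an explicit graph isomorphism, namely the identity map on the underlying set $V(\Gamma)$, since both graphs have the same vertex set (in the generalized join $\Im_{_{\Gamma^{\mathcal{R}}}}[K_{n_1},\ldots,K_{n_k}]$ each vertex $C_i$ of $\Im_{_{\Gamma^{\mathcal{R}}}}$ is replaced by a copy of $K_{n_i}$, which we identify with $C_i\subseteq V(\Gamma)$, and $V(\Gamma)=C_1\sqcup\cdots\sqcup C_k$). The task then reduces to verifying that two distinct vertices $x,y\in V(\Gamma)$ are adjacent in $\Gamma^{\mathcal{R}}$ if and only if they are adjacent in $\Im_{_{\Gamma^{\mathcal{R}}}}[K_{n_1},\ldots,K_{n_k}]$.

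The verification splits naturally into two cases. In the first case, suppose $x,y$ lie in the same class $C_i$. By the definition of $\Gamma^{\mathcal{R}}$ they are adjacent there, and in the generalized join they are adjacent because the block $\Gamma_i=K_{n_i}$ is complete; so the two conditions coincide trivially. In the second case, suppose $x\in C_i$ and $y\in C_j$ with $i\neq j$. By the definition of $\Gamma^{\mathcal{R}}$, $x\sim y$ in $\Gamma^{\mathcal{R}}$ iff there exist $x'\in C_i$ and $y'\in C_j$ with $x'\sim y'$ in $\Gamma$. But this is literally the adjacency condition defining $C_i\sim C_j$ in the compressed graph $\Im_{_{\Gamma^{\mathcal{R}}}}$. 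By the definition of the generalized join, $C_i\sim C_j$ in $\Im_{_{\Gamma^{\mathcal{R}}}}$ is in turn equivalent to every vertex of $K_{n_i}$ being joined to every vertex of $K_{n_j}$, hence to $x\sim y$ in $\Im_{_{\Gamma^{\mathcal{R}}}}[K_{n_1},\ldots,K_{n_k}]$. Chaining these equivalences yields the desired match of adjacencies.

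Putting both cases together, the identity map on $V(\Gamma)$ preserves and reflects edges, so it is a graph isomorphism. There is no real obstacle here; the argument is essentially a book-keeping exercise in unpacking the definitions of $\Gamma^{\mathcal{R}}$, the $\mathcal{R}$-compressed graph $\Im_{_{\Gamma^{\mathcal{R}}}}$, and the generalized join, together with the observation that each equivalence class induces a complete subgraph of $\Gamma^{\mathcal{R}}$ (which is why the blocks must be complete graphs, not arbitrary induced subgraphs of $\Gamma$).
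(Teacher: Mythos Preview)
Your proof is correct and follows essentially the same approach as the paper: both argue that each equivalence class induces a complete subgraph of $\Gamma^{\mathcal{R}}$ and that cross-class adjacency in $\Gamma^{\mathcal{R}}$ matches the adjacency in $\Im_{_{\Gamma^{\mathcal{R}}}}$ by definition. The paper's version is terser (it simply asserts the result follows from the definitions), while you have unpacked the case analysis explicitly, but there is no substantive difference.
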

\begin{proof}
It is clear from the construction of $\Gamma^{\mathcal{R}}$ that the induced subgraph of $\Gamma^{\mathcal{R}}$ with vertex set $C_i$ is isomorphic to $K_{n_i},$ for $1\leq i \leq k.$ The result follows from the definition of generalized join graph and the construction of $\mathcal{R}$-compressed $\Gamma$ graph.
\end{proof}
Consider the graph $\Gamma$ with $V(\Gamma)=\{v_1,v_2,v_3\}$ and $E(\Gamma)=\{v_2v_3\}$. Let $\mathcal{R}$ be the equivalence relation on $V(\Gamma)$ with equivalence classes $C_1=\{v_1,v_2\}$ and $C_2=\{v_3\}$. Then 
$\Im_{_{\Gamma^{\mathcal{R}}}}$ is isomorphic to $K_2$. In this case, though $\Gamma$ is disconnected but the $\mathcal{R}$-compressed $\Gamma$ graph is connected. The next result tells about how both the graphs $\Gamma$ and $\Im_{_{\Gamma^{\mathcal{R}}}}$ are related through connectedness.
\begin{theorem}\label{comc}
Let $\Gamma$ be a graph and let $\mathcal{R}$ be an equivalence relation on $V(\Gamma)$. Let $C_1, C_2, \ldots, C_k$ be the distinct $\mathcal{R}$-equivalence classes of $V(\Gamma)$ and let $\Gamma_i$ be the induced subgraph of $\Gamma$ with vertex set $C_i$, for $1\leq i \leq k$. If $\Gamma$ is connected then $\Im_{_{\Gamma^{\mathcal{R}}}}$ is connected. Conversely,  if $\Im_{_{\Gamma^{\mathcal{R}}}}$ is connected and all the $\Gamma_i$s are connected then $\Gamma$ is connected.
\end{theorem}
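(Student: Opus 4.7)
My plan is a direct path-chasing argument for both directions. Note that Lemma~\ref{Gj} does not directly apply here, since that result relates $\Im$ to the generalized join $\Im[\Gamma_1,\ldots,\Gamma_k]$, whereas the present statement relates $\Im_{_{\Gamma^{\mathcal{R}}}}$ to the original graph $\Gamma$, not to $\Gamma^{\mathcal{R}}$. The forward implication will require no hypothesis on the $\Gamma_i$, while the converse relies crucially on their connectedness --- as illustrated by the example preceding the theorem, where $\Gamma$ is disconnected even though $\Im_{_{\Gamma^{\mathcal{R}}}} \cong K_2$ is connected, because $\Gamma_1$ (on $\{v_1,v_2\}$) is edgeless.

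For the forward direction, I would fix two classes $C_i, C_j$, choose representatives $x \in C_i$ and $y \in C_j$, and invoke connectedness of $\Gamma$ to obtain a path $x = u_0, u_1, \ldots, u_\ell = y$ in $\Gamma$. Writing $C_{\alpha_s}$ for the unique class containing $u_s$, each edge $u_{s-1}u_s \in E(\Gamma)$ witnesses adjacency $C_{\alpha_{s-1}} \sim C_{\alpha_s}$ in $\Im_{_{\Gamma^{\mathcal{R}}}}$ whenever $\alpha_{s-1} \neq \alpha_s$. Collapsing consecutive repetitions in the sequence $C_{\alpha_0}, \ldots, C_{\alpha_\ell}$ yields a walk from $C_i$ to $C_j$ in $\Im_{_{\Gamma^{\mathcal{R}}}}$, which is enough to conclude connectedness.

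For the converse, I would take arbitrary $x \in C_i$ and $y \in C_j$ and select a path $C_i = C_{\alpha_0}, C_{\alpha_1}, \ldots, C_{\alpha_\ell} = C_j$ in $\Im_{_{\Gamma^{\mathcal{R}}}}$. For each $1 \le s \le \ell$, the adjacency $C_{\alpha_{s-1}} \sim C_{\alpha_s}$ supplies witnesses $b_{s-1} \in C_{\alpha_{s-1}}$ and $a_s \in C_{\alpha_s}$ with $b_{s-1}a_s \in E(\Gamma)$. Setting $a_0 := x$ and $b_\ell := y$, the hypothesis that each $\Gamma_{\alpha_s}$ is connected yields, for every $0 \le s \le \ell$, a walk from $a_s$ to $b_s$ lying entirely inside the subgraph $\Gamma_{\alpha_s} \subseteq \Gamma$. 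Concatenating these intra-class walks with the $\ell$ cross-class edges $b_{s-1}a_s$ produces a walk from $x$ to $y$ in $\Gamma$. The only real subtlety --- and thus the main place to be careful --- is the bookkeeping at the endpoints and in degenerate cases such as $a_s = b_s$ (where the intra-class walk is trivial) or $\ell = 0$ (where $x$ and $y$ lie in the same class and a single intra-class walk suffices); no serious obstacle should arise beyond this careful indexing.
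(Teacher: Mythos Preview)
Your proposal is correct. The converse direction matches the paper's argument in spirit --- indeed, your version is more carefully spelled out, since the paper simply asserts that connectedness of $\Im_{_{\Gamma^{\mathcal{R}}}}$ yields some $x'\in C_i$, $y'\in C_j$ joined by a path in $\Gamma$, tacitly using the connectedness of every intermediate $\Gamma_l$ along the way, whereas you make that stitching explicit.

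The forward direction, however, is handled differently. The paper does use Lemma~\ref{Gj}, but indirectly: it notes that $\Gamma$ is a spanning subgraph of $\Gamma^{\mathcal{R}}$, so $\Gamma^{\mathcal{R}}$ is connected; then Theorem~\ref{obs} identifies $\Gamma^{\mathcal{R}}$ with $\Im_{_{\Gamma^{\mathcal{R}}}}[K_{n_1},\ldots,K_{n_k}]$, and Lemma~\ref{Gj} gives connectedness of $\Im_{_{\Gamma^{\mathcal{R}}}}$. Your direct path-projection argument (lift a $\Gamma$-path, read off the sequence of classes, collapse repetitions) bypasses this detour through $\Gamma^{\mathcal{R}}$ entirely. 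Your route is more elementary and self-contained; the paper's route has the advantage of reusing the structural Theorem~\ref{obs} and Lemma~\ref{Gj} already in hand.
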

\begin{proof}
Suppose $\Gamma$ is connected. Then as $\Gamma$ is a spanning subgraph of $\Gamma^{\mathcal{R}}$, so $\Gamma^{\mathcal{R}}$ is connected. By Theorem \ref{obs}, $\Gamma^{\mathcal{R}}$ is $\Im_{_{\Gamma^{R}}}$-generalized join of some complete graphs and hence $\Im_{_{\Gamma^{R}}}$ is connected by Lemma \ref{Gj}.

Now suppose $\Im_{_{\Gamma^{\mathcal{R}}}}$ is connected and all the $\Gamma_i$s are connected. Let $x,y\in V(\Gamma)$. If $x,y\in C_i$ for some $i,\; 1\leq i \leq k$ then there is a path between $x$ and $y$ in $\Gamma$ as $\Gamma_i$ is connected. Otherwise let $x\in C_i$ and $y\in C_j$ where $i\neq j$. Since  $\Im_{_{\Gamma^{\mathcal{R}}}}$ is connected, there exist $x'\in C_i$ and $y'\in C_j$ such that there is a path between $x'$ and $y'$ in $\Gamma$. So there is a path between $x$ and $y$ in $\Gamma$ as both $\Gamma_i$ and $\Gamma_j$ are connected.
\end{proof}
 We will now discuss the adjacency and Laplacian characteristic polynomials of the  $\mathcal{R}$-super $\Gamma$ graph $\Gamma^{\mathcal{R}}$. Let $\Gamma$ be a connected graph. Let $\mathcal{R}$ be an equivalence relation on $V(\Gamma)$ with $k$ distinct equivalence classes. If $k\leq 2$ then $\Gamma^{\mathcal{R}}$ is complete. So, for rest of this section, we consider $k\geq 3$.
\begin{theorem}\label{Ch-Su_Gr}
Let $\Gamma$ be a connected graph on $n$ vertices and let $\mathcal{R}$ be an equivalence relation on $V(\Gamma)$. Let $C_1, C_2, \ldots, C_k$ be the distinct $\mathcal{R}$-equivalence classes of $V(\Gamma)$ with $|C_i|=n_i$ for $1\leq i \leq k$. Suppose $A(\Im_{_{\Gamma^{\mathcal{R}}}})=[\rho_{ij}]$ and  $ N_i = \sum\limits_{C_iC_j \in E(\Im_{_{\Gamma^{\mathcal{R}}}})}n_j$, for $1\leq  i \leq  k.$ Then  we have the following:		
\begin{enumerate}[\rm (i)]
\item The characteristic polynomial of  $A(\Gamma^{\mathcal{R}})$ is given by
			
			\[ \chi(A(\Gamma^{\mathcal{R}}), x) = \chi(N(0), x) (x + 1)^{n - k}, \]
			
			where $N(0) = \displaystyle \begin{bmatrix}
				n_1-1  & \sqrt{n_1n_2}\rho_{12} & \cdots & \sqrt{n_1n_k}\rho_{1k}   \\
				\sqrt{n_1n_2}\rho_{12} & n_2-1  & \cdots & \sqrt{n_2n_k}\rho_{2k}\\
				\; \; \vdots & \; \; \vdots& \ddots & \; \; \vdots \\ 
				\sqrt{n_1n_k}\rho_{1k}  & \sqrt{n_2n_k}\rho_{2k} & \cdots  & n_k-1  
			\end{bmatrix}.$\\ 
           
\item The characteristic polynomial of  $L(\Gamma^{\mathcal{R}})$ is given by\[\chi(L(\Gamma^{\mathcal{R}}), x) = \chi(-N(1), x) \prod\limits_{i = 1}^{k}   (x- N_i- n_i)^{n_i - 1},\]
			
\noindent where $N(1) = \displaystyle \begin{bmatrix}
				-	N_1  & \sqrt{n_1n_2}\rho_{12} & \cdots & \sqrt{n_1n_k}\rho_{1k}   \\
				\sqrt{n_1n_2}\rho_{12} & -N_2  & \cdots & \sqrt{n_2n_k}\rho_{2k}\\
				\; \; \vdots & \; \; \vdots& \ddots & \; \; \vdots \\ 
				\sqrt{n_1n_k}\rho_{1k}  & \sqrt{n_2n_k}\rho_{2k} & \cdots  &- N_k  
			\end{bmatrix}.$
  		
\end{enumerate}				
\end{theorem}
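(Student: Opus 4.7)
The plan is to apply Theorem \ref{obs} and then invoke Theorem \ref{Gen-Ch-Poly} directly, finally extracting the adjacency and Laplacian characteristic polynomials via Remark \ref{rem}. By Theorem \ref{obs}, $\Gamma^{\mathcal{R}} \cong \Im_{_{\Gamma^{\mathcal{R}}}}[K_{n_1}, K_{n_2}, \ldots, K_{n_k}]$, and each $K_{n_i}$ is $(n_i-1)$-regular, so in the notation of Theorem \ref{Gen-Ch-Poly} we have $r_i = n_i - 1$. This immediately identifies the matrix $N(t)$ appearing in Theorem \ref{Gen-Ch-Poly} with the matrices $N(0)$ and $N(1)$ stated in the theorem.

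For part (i), I would substitute $t = 0$ in the formula of Theorem \ref{Gen-Ch-Poly}. The factor in the product becomes $\phi_{K_{n_i}}(x, 0)/(x - (n_i-1))$, and since $\chi(A(K_{n_i}), x) = (x+1)^{n_i-1}(x - (n_i - 1))$, the denominator cancels one factor, leaving $(x+1)^{n_i - 1}$. Taking the product over $i$ yields $(x+1)^{\sum_i (n_i - 1)} = (x+1)^{n-k}$, which by Remark \ref{rem} gives the claimed formula for $\chi(A(\Gamma^{\mathcal{R}}), x)$.

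For part (ii), I set $t = 1$ in Theorem \ref{Gen-Ch-Poly}. The denominator $x - r_i + t(r_i + N_i)$ reduces to $x + N_i$. Since $L(K_{n_i})$ has eigenvalues $0$ (once) and $n_i$ ($n_i - 1$ times), a short computation gives $\phi_{K_{n_i}}(y, 1) = \det(yI + L(K_{n_i})) = y(y + n_i)^{n_i - 1}$, so that $\phi_{K_{n_i}}(x + N_i, 1) = (x + N_i)(x + N_i + n_i)^{n_i - 1}$. After cancellation the product becomes $\prod_i (x + N_i + n_i)^{n_i - 1}$, and by Remark \ref{rem} we get
\[
\chi(L(\Gamma^{\mathcal{R}}), x) = (-1)^n \chi(N(1), -x) \prod_{i=1}^k (-x + N_i + n_i)^{n_i - 1}.
\]
Finally, rewriting $\chi(N(1), -x) = (-1)^k \chi(-N(1), x)$ and pulling out the sign $(-1)^{n_i - 1}$ from each factor in the product gives an overall sign $(-1)^n (-1)^k (-1)^{n-k} = 1$, yielding the stated formula.

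The only real obstacle is a careful sign bookkeeping in part (ii) (the three sources of signs — from $(-1)^n \phi(-x, 1)$, from $\chi(N(1), -x)$, and from the $(-x + N_i + n_i)^{n_i - 1}$ factors — must cancel exactly). Otherwise, both parts are essentially immediate specializations of Theorem \ref{Gen-Ch-Poly} applied to the generalized join decomposition of $\Gamma^{\mathcal{R}}$ into complete graphs.
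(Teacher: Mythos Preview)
Your proof is correct and follows essentially the same route as the paper: apply Theorem~\ref{obs} to write $\Gamma^{\mathcal{R}}$ as a generalized join of complete graphs, then specialize Theorem~\ref{Gen-Ch-Poly} at $t=0$ and $t=1$ via Remark~\ref{rem}, with the same sign bookkeeping in part~(ii). The only point you omit, which the paper does state, is that Theorem~\ref{Gen-Ch-Poly} requires $\Im_{\Gamma^{\mathcal{R}}}$ to be connected --- this follows from Theorem~\ref{comc} since $\Gamma$ is connected, and you should add that one line.
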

\begin{proof}
By Theorem \ref{comc}, $\Im_{_{\Gamma^{\mathcal{R}}}}$ is connected as $\Gamma$ is connected. For $1\leq i \leq k$, the induced subgraph of $\Gamma^{\mathcal{R}}$ with vertex set $C_i$ is isomorphic to $K_{n_i}$ and by Theorem \ref{obs}, $\Gamma^{\mathcal{R}}$ is isomorphic to $\Im_{_{\Gamma^{R}}}[K_{n_1}, K_{n_2},\ldots, K_{n_k}].$

(i) Then in view of Theorem \ref{Gen-Ch-Poly} and Remark \ref{rem}, we have
		\[  \chi(A(\Gamma^{\mathcal{R}}), x) = \chi(N(0), x) \prod_{i = 1}^{k} \frac{\chi(A(K_{n_i}), x)}{x-(n_i - 1)},  \]

        where $N(0) = \displaystyle \begin{bmatrix}
			n_1-1  & \sqrt{n_1n_2}\rho_{12} & \cdots & \sqrt{n_1n_k}\rho_{1k}\\
			\sqrt{n_1n_2}\rho_{12} & n_2-1  & \cdots & \sqrt{n_2n_k}\rho_{2k}\\
			\; \; \vdots & \; \; \vdots& \ddots & \; \; \vdots \\ 
			\sqrt{n_1n_k}\rho_{1k}  & \sqrt{n_2n_k}\rho_{2k} & \cdots  & n_k-1  
		\end{bmatrix}$.\\
  
        Since  the eigenvalues of  $A(K_{n_i})$ are $-1$ and $n_i- 1$ with multiplicities $n_i - 1$ and $1$, respectively, we have 
		\[ \chi(A(\Gamma^{\mathcal{R}}), x) = \chi(N(0), x) (x + 1)^{n_1 + n_2 + \cdots + n_k -k }= \chi(N(0), x) (x + 1)^{n - k}. \]
		
(ii) Again, from Theorem \ref{Gen-Ch-Poly} and Remark \ref{rem}, we have
		
	\[\chi(L(\Gamma^{\mathcal{R}}), x) = (-1)^n \chi(N(1), -x) \prod\limits_{i = 1}^{k} \frac{\phi_{K_{n_i}}(-x + N_i, 1)}{N_i - x}, \]
	
		where  $N(1) = \displaystyle \begin{bmatrix}
		-N_1  & \sqrt{n_1n_2}\rho_{12} & \cdots & \sqrt{n_1n_k}\rho_{1k}   \\
		\sqrt{n_1n_2}\rho_{12} & - N_2  & \cdots & \sqrt{n_2n_k}\rho_{2k}\\
		\; \; \vdots & \; \; \vdots& \ddots & \; \; \vdots \\ 
		\sqrt{n_1n_k}\rho_{1k}  & \sqrt{n_2n_k}\rho_{2k} & \cdots  & -N_k
	\end{bmatrix}.$  
	
	\vspace{0.3cm}
	
	\noindent 
	Also, for $1 \leq i \leq k$, we have

	\[\phi_{K_{n_i}}(-x + N_i, 1) = {\rm det}((-x + N_i)I_{n_i} -A(K_{n_i}) + D(K_{n_i})) \]
	\[ ~~~~~~~~~~~~~ ~~~~~~~ = {\rm det}((-x + N_i + n_i- 1)I_{n_i} -A(K_{n_i}))\]
	\[~~~~~~~~~~~~~~~~~~~~~~~~~~~~~~ = (-1)^{n_i} {\rm det}(xI_{n_i} - ((N_i + n_i -1)I_{n_i}- A(K_{n_i})))\]
	\[~~~~~~~~~~~~~~~~~~~~ = (-1)^{n_i} {\rm det}(xI_{n_i} - ((N_i + n_i)I_{n_i}- J_{n_i})).\]
	
	\noindent The eigenvalues of the matrix $J_{n_i}$ are $0$ and $n_i$ with multiplicities $n_i-1$ and $1$ respectively. So, the eigenvalues of the matrix $(N_i + n_i)I_{n_i}- J_{n_i}$ are $N_i + n_i$ and $N_i$ with multiplicities $n_i-1$ and $1$, respectively. Thus, we have 
	
	\[\phi_{K_{n_i}}(-x + N_i, 1) = (-1)^{n_i} (x - N_i) (x- N_i- n_i)^{n_i - 1}. \]

  \noindent Hence, \\
$\begin{aligned}
	\chi(L(\Gamma^{\mathcal{R}}), x) &= (-1)^{n} \chi(N(1), -x) \prod\limits_{i = 1}^{k} \frac{\phi_{K_{n_i}}(-x + N_i, 1)}{N_i - x}\\
	                                 &=(-1)^{n} (-1)^k \chi(-N(1), x) \prod\limits_{i = 1}^{k} \frac{(-1)^{n_i} (x - N_i) (x- N_i- n_i)^{n_i - 1}}{N_i -x}\\
                                     &= \chi(-N(1), x) \prod\limits_{i = 1}^{k}   (x- N_i- n_i)^{n_i - 1}.
\end{aligned}$\\

This completes the proof.
	
\end{proof}

For the star graph $K_{1,k-1}$ with vertex set $\{v_1,v_2,\ldots,v_k\}$ and  $k\geq 2$, we consider $v_1$ as the central vertex, $i.e.$ deg$(v_1)=k-1$. Then we have the following corollaries which are very useful. 

\begin{corollary}\label{Spec}
Let $\Gamma$ be the generalized join graph $K_{1,k-1}[K_{n_1},K_{n_2},\ldots, K_{n_k}]$ with $n=n_1+n_2+\cdots+n_k$. Then the characteristic polynomial of $A(\Gamma)$ is given by
		\[\chi(A(\Gamma), x) = (x + 1)^{n -k} \{\prod_{i = 1}^{k}(x- n_i + 1) -  n_1n_2 \prod_{i = 3}^{k}(x- n_i + 1) - \cdots \] \[~~~~~~~~~~ -n_1n_l\prod_{i = 2,~ i \neq l}^{k}(x- n_i + 1) - \cdots - n_1n_k\prod_{i = 2}^{k-1}(x- n_i + 1)\}.\]
\end{corollary}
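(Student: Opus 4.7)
The plan is to recognise that $\Gamma = K_{1,k-1}[K_{n_1},\ldots,K_{n_k}]$ is already in the form to which Theorem \ref{Ch-Su_Gr}(i) applies: take $\mathcal{R}$ to be the equivalence relation on $V(\Gamma)$ whose classes are the vertex sets of the building blocks $K_{n_1},\ldots,K_{n_k}$. Then the induced subgraph on each class is $K_{n_i}$ and the $\mathcal{R}$-compressed graph is precisely $K_{1,k-1}$, so $\Gamma \cong \Gamma^{\mathcal{R}}$ and the theorem gives
\[\chi(A(\Gamma), x) = \chi(N(0), x)\,(x+1)^{n-k}.\]
With $v_1$ taken as the central vertex of $K_{1,k-1}$, the entries of its adjacency matrix satisfy $\rho_{1j} = \rho_{j1} = 1$ for $2 \le j \le k$ and $\rho_{ij} = 0$ otherwise, so $N(0)$ is an \emph{arrowhead matrix} with diagonal $n_1 -1,\,n_2-1,\,\ldots,\,n_k-1$ and the only nonzero off-diagonal entries being $\sqrt{n_1 n_j}$ in positions $(1,j)$ and $(j,1)$.

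The one substantive step is computing $\chi(N(0), x) = \det(xI_k - N(0))$. I would expand along the first row (equivalently, along any column $\ell \ge 2$): every minor obtained by deleting the first row and some column $\ell \ge 2$ is upper/lower triangular once the single nonzero off-diagonal entry is localised, so the cofactor expansion collapses into the closed form
\[\chi(N(0),x) \;=\; \prod_{i=1}^{k}(x - n_i + 1) \;-\; \sum_{\ell=2}^{k} n_1 n_\ell \prod_{\substack{i=2 \\ i\ne \ell}}^{k} (x - n_i + 1),\]
where the squares $(\sqrt{n_1 n_\ell})^2 = n_1 n_\ell$ produce the coefficients in the subtracted terms. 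A one-line verification in the $k=3$ case confirms the sign pattern, and the general pattern is identical because all principal minors of $xI_k - N(0)$ indexed by subsets avoiding $\{1\}$ are diagonal.

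Multiplying the resulting expression for $\chi(N(0),x)$ by the factor $(x+1)^{n-k}$ from Theorem \ref{Ch-Su_Gr}(i) and writing the sum out term by term (the $\ell=2$ summand is $n_1 n_2\prod_{i=3}^{k}(x-n_i+1)$, the $\ell=k$ summand is $n_1 n_k\prod_{i=2}^{k-1}(x-n_i+1)$, etc.) gives precisely the formula claimed in the statement. There is no real obstacle here; the only thing to be careful about is keeping the arrowhead determinant formula correctly indexed, which is a routine cofactor computation.
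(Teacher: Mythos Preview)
Your proposal is correct and follows essentially the same route as the paper: apply Theorem~\ref{Ch-Su_Gr}(i) to obtain the factor $(x+1)^{n-k}$ together with $\chi(N(0),x)$, observe that for $\Im=K_{1,k-1}$ the matrix $xI_k-N(0)$ is an arrowhead matrix, and expand along the first row to reach the stated closed form. The paper carries out exactly this cofactor expansion (writing out each of the $k-1$ minors explicitly), so there is no substantive difference in method.
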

	
\begin{proof} Let $A(K_{1,k-1})=[\rho_{ij}]$. Then for $2\leq j\leq k$, $\rho_{1j}= \rho_{j1}=  1$  and for  $2 \leq i, j \leq k,\; \; i\neq j,$       $\rho_{ij} =  0$.   By  Theorem \ref{Ch-Su_Gr}(i), we have
		\[ \chi(A(\Gamma), x) = \chi(N(0), x) (x + 1)^{n - k}, \]
		
		where  
		\[\chi(N(0), x) = \displaystyle \begin{vmatrix}
			x -n_1 +1  & - \sqrt{n_1n_2} & - \sqrt{n_1n_3} & \cdots & - \sqrt{n_1n_k}   \\
			- \sqrt{n_1n_2} & x- n_2 + 1 & 0  & \cdots & 0 \\
			- \sqrt{n_1n_3} & 0 & x- n_3 + 1  & \cdots & 0 \\
			\; \; \vdots & \; \; \vdots & \; \; \vdots& \ddots & \; \; \vdots \\ 
			- \sqrt{n_1n_k}  & 0 & 0  & \cdots  & x - n_{k} + 1  
		\end{vmatrix}.\]
		
		Expanding the above determinant by first row, we get \\
		\[\chi(N(0), x) = (x -n_1 +1)(x- n_2 + 1) (x- n_3 + 1) \cdots ( x - n_k + 1 ) + \] 
		
		\[ \sqrt{n_1n_2} \displaystyle \begin{vmatrix}
			-\sqrt{n_1n_2}  & 0  & \cdots & 0 \\
			-\sqrt{n_1n_3} & x- n_3 + 1  & \cdots & 0 \\
			\; \; \vdots &  \; \; \vdots& \ddots & \; \; \vdots \\ 
			-\sqrt{n_1n_k}  & 0  & \cdots  & x - n_k + 1  
		\end{vmatrix} +  \]
		
		\[\cdots + (-1)^{l}\sqrt{n_1n_l} \displaystyle \begin{vmatrix}
			-\sqrt{n_1n_2} & x- n_2 + 1 & 0  & \cdots & 0 \\
			-\sqrt{n_1n_3} & 0 & x- n_3 + 1  & \cdots & 0 \\
			\; \; \vdots & \; \; \vdots & \; \; \vdots& \ddots & \; \; \vdots \\ 
			-\sqrt{n_1n_l}  & 0 & 0  & \cdots  & 0\\
			\; \; \vdots & \; \; \vdots & \; \; \vdots& \ddots & \; \; \vdots \\ 
			-\sqrt{n_1n_k}  & 0 & 0  & \cdots  & x - n_k + 1  
		\end{vmatrix} +  \cdots + \]	
\vspace{0.5cm}
		
		\[ (-1)^{k}\sqrt{n_1n_k} \displaystyle \begin{vmatrix}
			-\sqrt{n_1n_2} & x- n_2 + 1 & 0  &  \cdots & 0 \\
			-\sqrt{n_1n_3} & 0 & x- n_3 + 1  &\cdots &  0 \\
			\; \; \vdots & \; \; \vdots & \; \; \vdots&  \ddots &  \vdots \\ 
			-\sqrt{n_1n_l}  & 0 & 0  & \cdots  & 0 \\
			\; \; \vdots & \; \; \vdots & \; \; \vdots& \ddots & \vdots \\
			-\sqrt{n_1n_{k-1}}  & 0 & 0  & \cdots  & x - n_{k - 1} + 1   \\
			-\sqrt{n_1n_k}  & 0 & 0  & \cdots & 0    
		\end{vmatrix}.\]
		
\vspace{0.3cm}
		
\noindent Solving these determinants, we get\\
\[\chi(N(0), x) = \prod_{i = 1}^{k}(x- n_i + 1) -  n_1n_2 \prod_{i = 3}^{k}(x- n_i + 1) - \cdots \] \[~~~~~~~~~~ -n_1n_l\prod_{i = 2,~ i \neq l}^{k}(x- n_i + 1) - \cdots - n_1n_k\prod_{i = 2}^{k-1}(x- n_i + 1).\] Hence the result follows.		
\end{proof}
\begin{corollary}
Let $\Gamma$ be the generalized join graph $K_{1,k-1}[K_{l},K_{m},\ldots, K_{m}]$. Then $$ \sigma(A(\Gamma)) = \displaystyle \begin{pmatrix}
	\frac{m+l-2-\sqrt{m^2+l^2+(4k-6)ml}}{2} & -1 & m-1 & \frac{m+l-2+\sqrt{m^2+l^2+(4k-6)ml}}{2}\\
	1 & m(k-1)+l-k & k-2 & 1\\
			\end{pmatrix}.$$
\end{corollary}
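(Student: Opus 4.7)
This is a direct specialization of the previous corollary. The plan is to substitute $n_1=l$ and $n_2=n_3=\cdots=n_k=m$ into the formula for $\chi(A(\Gamma),x)$ given by Corollary~\ref{Spec} and then simplify the bracketed polynomial into a product of a simple factor and a quadratic, from which the eigenvalues can be read off.

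First, with this choice of parameters one has $n=l+(k-1)m$, and therefore the exponent $n-k=l+(k-1)m-k=l-k+m(k-1)$, which accounts exactly for the multiplicity of $-1$ in the claimed spectrum. Next, each of the $k-1$ ``subtracted'' terms $n_1n_j\prod_{i=2,\,i\neq j}^{k}(x-n_i+1)$ reduces to the same expression $lm(x-m+1)^{k-2}$, while the leading product is $(x-l+1)(x-m+1)^{k-1}$. Factoring out $(x-m+1)^{k-2}$ from Corollary~\ref{Spec} yields
\[
\chi(A(\Gamma),x) = (x+1)^{n-k}(x-m+1)^{k-2}\Bigl[(x-l+1)(x-m+1) - (k-1)lm\Bigr].
\]
The factor $(x-m+1)^{k-2}$ contributes the eigenvalue $m-1$ with multiplicity $k-2$.

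The remaining step is to compute the roots of the quadratic $q(x)=(x-l+1)(x-m+1)-(k-1)lm$. A short expansion gives $q(x)=x^2-(m+l-2)x+(l-1)(m-1)-(k-1)lm$, whose discriminant simplifies as $(m+l-2)^2-4(l-1)(m-1)=(m-l)^2$, so
\[
\Delta \;=\; (m-l)^2 + 4(k-1)lm \;=\; m^2+l^2+(4k-6)lm.
\]
Hence the two roots of $q(x)$ are $\dfrac{m+l-2 \pm \sqrt{m^2+l^2+(4k-6)lm}}{2}$, each with multiplicity $1$. Combining the three contributions produces exactly the claimed spectrum.

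There is really no obstacle: the work is entirely in verifying the algebraic identity for the discriminant and checking that the $(k-1)$ identical subtractive terms assemble into a clean factorization. The only thing to be careful about is the bookkeeping of multiplicities so that they sum to $n=l+(k-1)m$, which is immediate: $(m(k-1)+l-k)+(k-2)+1+1=l+(k-1)m$.
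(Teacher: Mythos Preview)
Your proof is correct and follows essentially the same approach as the paper: both specialize Corollary~\ref{Spec} with $n_1=l$, $n_2=\cdots=n_k=m$, factor out $(x-m+1)^{k-2}$, and solve the resulting quadratic. Your version is in fact slightly more careful---the paper's displayed factorization contains a typo ($(x-m+1)^2$ in place of $(x-m+1)$) and omits the discriminant computation you spell out.
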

\begin{proof}
By Corollary \ref{Spec}, we have \\ 
$\begin{aligned}
\chi(A(\Gamma), x) &= (x + 1)^{m(k-1)+l-k} \{(x-l+1)(x-m+1)^{k-1} - (k-1)ml(x-m+1)^{k-2}\}\\
                   &=(x + 1)^{m(k-1)+l-k}(x-m+1)^{k-2}\{(x-l+1)(x-m+1)^2 - (k-1)ml\}\\
                   &=(x + 1)^{m(k-1)+l-k}(x-m+1)^{k-2}(x-\alpha)(x-\beta),
\end{aligned}$ \\
where $\alpha=\frac{m+l-2+\sqrt{m^2+l^2+(4k-6)ml}}{2}$ and $\beta=\frac{m+l-2-\sqrt{m^2+l^2+(4k-6)ml}}{2}.$ Hence the result follows.
\end{proof}
\begin{corollary}\label{Spec_Lap}
Let $\Gamma$ be the generalized join graph $K_{1,k-1}[K_{n_1},K_{n_2},\ldots, K_{n_k}]$ with $n=n_1+n_2+\cdots+n_k$. Then the characteristic polynomial of $L(\Gamma)$ is given by		
		$$\chi(L(\Gamma), x) = x(x - n)(x-n_1)^{k-2}\prod\limits_{i = 1}^{k}(x- N_i- n_i)^{n_i - 1},$$
where $ N_i = \sum\limits_{v_iv_j \in E(K_{1,k-1})}n_j$, for $1\leq  i \leq  k.$
\end{corollary}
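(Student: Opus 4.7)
The plan is to apply Theorem \ref{Ch-Su_Gr}(ii) directly to $\Gamma = K_{1,k-1}[K_{n_1},K_{n_2},\ldots,K_{n_k}]$. With $v_1$ as the central vertex of $K_{1,k-1}$, the adjacency pattern gives $\rho_{1j} = \rho_{j1} = 1$ for $2 \leq j \leq k$ and all other off-diagonal $\rho_{ij} = 0$, so $N_1 = n - n_1$ and $N_i = n_1$ for $2 \leq i \leq k$. Theorem \ref{Ch-Su_Gr}(ii) then yields the factor $\prod_{i=1}^{k}(x - N_i - n_i)^{n_i-1}$ immediately, and the task reduces to showing that $\chi(-N(1),x) = x(x-n)(x-n_1)^{k-2}$.

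Next, I would write down $xI_k - (-N(1))$ explicitly. Because of the star structure, it is an arrowhead matrix: diagonal entries $x - N_i$, together with $\sqrt{n_1 n_j}$ in positions $(1,j)$ and $(j,1)$ for $2 \leq j \leq k$, and zeros elsewhere. I would evaluate this determinant by expansion along the first row, just as in the proof of Corollary \ref{Spec}. For $j \geq 2$, the $(1,j)$-minor contains a row (the one originally indexed by $j$) whose only nonzero entry is $\sqrt{n_1 n_j}$ in the first column, so a second cofactor expansion along that row produces a diagonal $(k-2)\times(k-2)$ block. Tracking the signs carefully gives
\[
\chi(-N(1),x) = (x - N_1)\prod_{i=2}^{k}(x - N_i) \;-\; \sum_{j=2}^{k} n_1 n_j \prod_{\substack{i=2 \\ i \neq j}}^{k}(x - N_i).
\]

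Substituting $N_i = n_1$ for $i \geq 2$ and $N_1 = n - n_1$, the right-hand side factors as
\[
(x - n_1)^{k-2}\Bigl[(x - N_1)(x - n_1) - n_1 \sum_{j=2}^{k} n_j\Bigr] = (x - n_1)^{k-2}\bigl[(x - N_1)(x - n_1) - n_1 N_1\bigr],
\]
and the bracketed quadratic expands to $x^2 - nx = x(x-n)$. Combining this with the product from Theorem \ref{Ch-Su_Gr}(ii) gives the claimed formula.

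The only delicate step is the sign bookkeeping in the double cofactor expansion; the rest is a straightforward substitution and factorisation. Since this expansion is essentially the same combinatorial calculation already carried out in Corollary \ref{Spec} (with the signs of the off-diagonal entries flipped), I would either reference that computation or, for completeness, reproduce the expansion with the adjusted signs.
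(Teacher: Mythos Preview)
Your proposal is correct and follows essentially the same route as the paper: apply Theorem~\ref{Ch-Su_Gr}(ii), identify $N_1=n-n_1$ and $N_i=n_1$ for $i\ge 2$, and compute $\chi(-N(1),x)$ by cofactor expansion of the arrowhead determinant along the first row to obtain $x(x-n)(x-n_1)^{k-2}$. The only cosmetic difference is that the paper substitutes $N_i=n_1$ before expanding, whereas you expand first and substitute afterwards; the computation and the sign bookkeeping are otherwise identical.
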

	
\begin{proof} We have $N_1 = n - n_1$ and for  $2 \leq i \leq k$, $N_i = n_1$. Then by  Theorem \ref{Ch-Su_Gr},
 $$\chi(L(\Gamma^{\mathcal{R}}), x) = \chi(-N(1), x) \prod\limits_{i = 1}^{k}   (x- N_i- n_i)^{n_i - 1}$$
        where \[N(1) = \displaystyle \begin{bmatrix}
		-(|V(\Gamma)| - n_1)  & \sqrt{n_1n_2} & \cdots & \sqrt{n_1n_k}   \\
		\sqrt{n_1n_2} & - n_1  & \cdots & 0\\
		\; \; \vdots & \; \; \vdots& \ddots & \; \; \vdots \\ 
		\sqrt{n_1n_k}  & 0 & \cdots  & -n_1
	\end{bmatrix}.\]

\noindent So, $\chi(-N(1), x) = \displaystyle \begin{vmatrix}
	x -(n - n_1)  &  \sqrt{n_1n_2} &  \sqrt{n_1n_3} & \cdots &  \sqrt{n_1n_k}   \\
	\sqrt{n_1n_2} & x- n_1 & 0  & \cdots & 0 \\
	 \sqrt{n_1n_3} & 0 & x- n_1  & \cdots & 0 \\
	\; \; \vdots & \; \; \vdots & \; \; \vdots& \ddots & \; \; \vdots \\ 
	 \sqrt{n_1n_k}  & 0 & 0  & \cdots  & x - n_1  
\end{vmatrix} $ 

and by expanding the determinant, we get \\

    $\begin{aligned}
     \chi(-N(1), x) &= (x -n+ n_1)(x-n_1)^{k-1} - n_1 n_2 (x-n_1)^{k-2} - \cdots - -n_1n_k(x-n_1)^{k-2}\\
     &= (x-n_1)^{k-2}[(x -(n- n_1))(x-n_1) - n_1 (n_2 + \cdots + n_k)]\\
     &= (x-n_1)^{k-2}[(x -(n- n_1))(x-n_1) - n_1 (n- n_1)]\\
     &= (x-n_1)^{k-2}(x -n)x.
     \end{aligned}$\\

Thus,
$\chi(L(\Gamma), x) = x(x -n)(x-n_1)^{k-2}\prod\limits_{i = 1}^{k}(x- N_i- n_i)^{n_i - 1}.$ \\
\end{proof}
The next section is devoted to construction of some classes of graphs which are isomorphic to star generalize join of some complete graphs. 

\section{Super commuting graph}
	
Let $G$ be a finite group with the identity element $e$. For $a\in G$, by $\langle a \rangle$ we mean the cyclic subgroup of $G$ generated  by $a$. The \textit{commuting graph} $\Delta(G)$ of $G$ is a simple graph with vertex set $G$ and two distinct vertices are adjacent whenever they commute. The commuting graph of $G$ was introduced by Brauer and Fowler in \cite{BF} where they have considered vertex set as $G - e.$ Later some authors have considered $\Delta(G)$ with vertex set $G-Z(G)$, where $Z(G)$ is the center of $G$. There are many graphs defined on $G$ with vertex set as the whole group $G$. To compare different graphs defined on $G$ and study their hierarchy, recently people have started considering commuting graph $\Delta(G)$ with vertex set as whole $G$ (see \cite{Ca}). So we have considered commuting graph $\Delta(G)$ with vertex set as whole $G$. For more on commuting graphs on different algebraic structures, we refer to \cite{ABJ,AK1,DB, GP, JK, Sh, Wo}.

For the study of super commuting graph on $G$, we consider the following two equivalence relation on $G$:	
\begin{enumerate}
\item[(i)] \emph{conjugacy relation}, $(x, y) \in \mathcal{R}_c$ if and only if $x = gyg^{-1}$ for some $g \in G$;

\item[(ii)] \emph{order relation}, $(x, y) \in \mathcal{R}_o$ if and only if $o(x) = o(y)$, where $o(g)$ denotes the order of  $g\in G$.
\end{enumerate}
We denote the $\mathcal{R}_c$-super commuting graph on $G$ by $\Delta^c(G)$ and the $\mathcal{R}_o$-super commuting graph by $\Delta^o(G).$ For $x,y\in G$, if $x$ is conjugate to $y$ in $G$ then $o(x)=o(y)$. So, $\mathcal{R}_c \subseteq \mathcal{R}_o$ and hence by Theorem \ref{poset}, $\Delta^c(G)$ is a spanning subgraph of $\Delta^o(G).$ The study about the graphs $\Delta^c(G)$ and  $\Delta^o(G)$ is started by Arunkumar \emph{et al.} in \cite{AC} and then continued in \cite{DA}. For an abelian group $G$ of order $n$, $\Delta(G)\cong K_n$ and hence $\Delta^c(G)=\Delta^o(G)\cong K_n.$ So we consider nonabelian groups only. For our study, we examine the following three classes of groups:

\begin{enumerate}
\item[(i)] \emph{Dihedral group:} $D_{2n} = \langle a, b \; : \; a^{n} = b^2 = e, \; ab = ba^{-1} \rangle, n\geq 3;$

\item[(ii)] \emph{Generalized quaternion group:} $Q_{4n} = \langle a, b \; : \; a^{2n} = e, \; b^2 = a^n, \; ab = ba^{-1} \rangle, n\geq 2;$

\item[(ii)] \emph{Nonabelian group of order $pq$($p$ and $q$ are distinct primes with $q|p-1$):} \[\mathbb Z_p \rtimes \mathbb Z_q = \langle a, b : b^p = a^q = e, aba^{-1} = b^m \; with \; m^q \equiv 1 ( {\rm mod}~ p)\; and \;  m \not\equiv 1 ( {\rm mod}~ p)\rangle.\]  
\end{enumerate}
The generalized quaternion group is also known as dicyclic group and the group $\mathbb Z_p \rtimes \mathbb Z_q$ is  the nontrivial semidirect product of the cyclic groups $\mathbb Z_p$ and $\mathbb Z_q.$ Element wise we can express the above groups as following:
\begin{enumerate}
\item[(i)] $D_{2n}=\{e\}\cup\{a^i:1\leq i \leq n-1\}\cup \{ba^i: 0\leq i \leq n-1\};$
\item[(ii)]$Q_{4n}=\{e\}\cup\{a^i:1\leq i \leq 2n-1\}\cup \{ba^i: 0\leq i \leq 2n-1\};$
\item[(iii)]$\mathbb Z_p \rtimes \mathbb Z_q=\{e\}\cup\{b^i:1\leq i \leq p-1\}\cup \{b^ia^j: 0\leq i \leq p-1,1\leq j \leq q-1 \}.$
\end{enumerate}
 It is important to know the order of the elements of the above groups to study the order super commuting graph of these groups. In $D_{2n},$ for $0\leq i \leq n-1,$ we have $o(ba^i)=2$ and $o(a^i)=\dfrac{n}{gcd(i,n)}.$ In the group $Q_{4n},$ $o(ba^i)=4$ and $o(a^i)=\dfrac{2n}{gcd(i,2n)}$ for $0\leq i\leq 2n-1$. Since there is a unique Sylow $p$- subgroup of $\mathbb Z_p \rtimes \mathbb Z_q$, we have $o(b^i)=p$ for $1\leq i \leq p-1$ and $o(b^ia^j)=q$ for $0\leq i \leq p-1,1\leq j \leq q-1.$

\subsection{Order super commuting graph}

In this subsection, we will first discuss the adjacency and Laplacian spectrum of $\Delta^o(G),$ for $G=D_{2n}, Q_{4n},\mathbb Z_p \rtimes \mathbb Z_q.$ 

First consider $n=2m$ for some positive integer $m$. In $D_{2n}$, let $C_1=\{ba^i: 0\leq i \leq n-1\}\cup \{a^m\}$ and $C_2=\langle a \rangle -\{e,a^m\}$. The induced subgraphs of $\Delta^o(D_{2n})$ with vertex set $C_1$ and $C_2$ are complete as $C_1$ is the order $2$ equivalence class and $C_2$ is a subset of the cyclic group $\langle a \rangle$. As $a^m$ is adjacent to all the elements of $C_2$ in $\Delta^o(D_{2n})$ so every element of $C_1$ is adjacent with each elements of $C_2$. Since $e$ is adjacent to all other vertices of $\Delta^o(D_{2n})$ so $\Delta^o(D_{2n})\cong K_{2n}$. Similarly, one can check that $\Delta^o(Q_{4n})\cong K_{4n}$ when $n$ is even.
\begin{theorem}\label{thm:adj}
Let $n\geq 3$ be odd and let $p$ and $q$ be distinct primes with $q|p-1$. Then 
\begin{enumerate}[\rm (i)]
\item $\chi(A(\Delta^o(D_{2n}),x)=(x+1)^{2n - 3}[x^3 - x^2 (2n -3) + x (n^2 - 5n + 3) + (2n^2 - 4n + 1)].$ Furthermore, $ \sigma(A(\Delta^o(D_{2n}))) =  \displaystyle \begin{pmatrix}
	\alpha & -1 & \beta & \gamma\\
	1 & 2n-3 & 1 & 1\\
			\end{pmatrix}$,
where $-2 < \alpha < -1,~ n -2 < \beta < n - 1$ and $n  < \gamma < n+1.$
			
\item $\chi(A(\Delta^o(Q_{4n})),x) =(x+1)^{4n - 3} [x^3 - x^2 (4n -3) + x (4n^2 - 12n + 3) + (12n^2 - 16n + 1)].$ Furthermore, $ \sigma(A(\Delta^o(Q_{4n}))) = \displaystyle \begin{pmatrix}
	\alpha & -1 & \beta & \gamma\\
	1 & 4n-3 & 1 & 1\\
			\end{pmatrix}$,
where $-3 < \alpha < -2,~ 2n -3 < \beta < 2n - 2$ and  $2n+1  < \gamma < 2n+2$ for $3\leq n \leq 13$, otherwise $2n+2  < \gamma < 2n+3.$
			
\item $\chi(A(\Delta^o(\mathbb Z_p \rtimes \mathbb Z_q)),x)=(x+1)^{pq- 3} [x^3 - x^2(pq - 3) + x(p^2q - 3pq - p^2 + p + 3) + (2p^2q - 3pq - 2p^2 + 2p + 1)].$ Furthermore, $ \sigma(A(\Delta^o(\mathbb Z_p \rtimes \mathbb Z_q))) =  \displaystyle \begin{pmatrix}
	\alpha & -1 & \beta & \gamma\\
	1 & 2n-3 & 1 & 1\\
			\end{pmatrix}$,
where $-2 < \alpha < -1,~ p -2 < \beta < p - 1$ and $pq-p  < \gamma < pq-p+1$.
\end{enumerate}
\end{theorem}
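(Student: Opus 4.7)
The plan is to prove, for each of the three groups $G$ in the statement, that $\Delta^o(G)$ is isomorphic to the generalized star-join $K_{1,2}[K_{n_1},K_{n_2},K_{n_3}]$ for a suitable tripartition $V(\Delta^o(G))=V_1\cup V_2\cup V_3$ with $|V_i|=n_i$, and then to apply Corollary \ref{Spec} together with elementary sign analysis of the resulting cubic.

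First I would exhibit the tripartition for each group. For $D_{2n}$ with $n$ odd, take $V_1=\{e\}$, $V_2=\{ba^i:0\leq i\leq n-1\}$, $V_3=\{a^i:1\leq i\leq n-1\}$. Although $V_3$ is in general a union of several order classes (one per divisor $d>1$ of $n$), every pair of elements in $V_3$ lies in the cyclic subgroup $\langle a\rangle$ and hence commutes in $\Delta(D_{2n})$, so the induced subgraph on $V_3$ in $\Delta^o(D_{2n})$ is $K_{n-1}$; likewise $V_2$ is a single order class and induces $K_n$. Since $n$ is odd, no reflection $ba^i$ commutes with any $a^j$ for $j\neq 0$, and the $\mathcal R_o$-class of a $V_2$-element is contained in $V_2$ while that of a $V_3$-element is contained in $V_3$, so no edge of $\Delta^o(D_{2n})$ joins $V_2$ to $V_3$. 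The identity is joined to everything, giving $\Delta^o(D_{2n})\cong K_{1,2}[K_1,K_n,K_{n-1}]$. For $Q_{4n}$ with $n$ odd one takes $V_1=\{e,a^n\}$ (both central), $V_2=\{ba^i:0\leq i\leq 2n-1\}$ (the single order-$4$ class, of size $2n$), and $V_3=\langle a\rangle\setminus\{e,a^n\}$ (size $2n-2$); the same arguments, together with the identity $ba^i\cdot a^j=a^j\cdot ba^i\iff n\mid j$, yield $\Delta^o(Q_{4n})\cong K_{1,2}[K_2,K_{2n},K_{2n-2}]$. For $\mathbb Z_p\rtimes\mathbb Z_q$ there are already exactly three order classes: $\{e\}$, the $p-1$ elements of order $p$, and the $p(q-1)$ elements of order $q$; triviality of the centre (checked via $aba^{-1}=b^m$ with $m^q\equiv 1$, $m\not\equiv 1$ mod $p$) forces no order-$p$ element to commute with any order-$q$ element, giving $\Delta^o(\mathbb Z_p\rtimes\mathbb Z_q)\cong K_{1,2}[K_1,K_{p-1},K_{p(q-1)}]$.

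With these three identifications in hand, Corollary \ref{Spec} applied with $k=3$ and the respective triples $(n_1,n_2,n_3)$ produces the three stated characteristic polynomials after routine algebraic simplification of the displayed products; in each case the factor $(x+1)^{|G|-3}$ emerges directly from the Corollary. To locate the three non-$(-1)$ eigenvalues I would use real-symmetry of $A(\Delta^o(G))$ (so the cubic has three real roots) together with sign evaluation of the cubic at convenient integer endpoints. For $D_{2n}$, writing $f(x)$ for the cubic, a direct computation gives $f(-2)=-(2n+1)$, $f(-1)=n^2-n$, $f(n-2)=n-1$, $f(n-1)=-n$, $f(n)=-(n-1)$ and $f(n+1)=n+8$; the resulting sign pattern, combined with the positive leading coefficient, pins one root into each of $(-2,-1)$, $(n-2,n-1)$ and $(n,n+1)$. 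For $Q_{4n}$ the analogous calculation produces $f(2n+2)=-2n+27$ and $f(2n+3)=8n+64>0$, which precisely explains the split between $3\leq n\leq 13$ (when the largest root lies in $(2n+1,2n+2)$) and $n\geq 14$ (when it lies in $(2n+2,2n+3)$). The $\mathbb Z_p\rtimes\mathbb Z_q$ case is handled identically by testing the cubic at $x\in\{-2,-1,p-2,p-1,pq-p,pq-p+1\}$.

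The main obstacle I anticipate is the combinatorial identification in the first step: specifically the claim that, for $D_{2n}$ and $Q_{4n}$ with composite odd $n$, all the different order classes of rotations can be lumped into a single star-join part $K_{n-1}$ (respectively $K_{2n-2}$), even though they are formally distinct $\mathcal R_o$-equivalence classes. Justifying this requires both the commutativity of $\langle a\rangle$ and the closure observation that the $\mathcal R_o$-class of a rotation contains only rotations, so that the super-relation creates no spurious edge between $V_2$ and $V_3$. Once this point is settled the rest of the proof is bookkeeping with Corollary \ref{Spec} and a handful of integer sign checks.
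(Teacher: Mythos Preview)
Your proposal is correct and follows essentially the same route as the paper: identify each $\Delta^o(G)$ as a $K_{1,2}$-generalized join of three cliques, apply Corollary~\ref{Spec} to obtain $(x+1)^{|G|-3}f(x)$, and locate the three roots of the cubic $f$ by evaluating it at the same integer test points. The one noteworthy difference is that the paper first invokes the Interlacing Theorem (via the principal submatrix obtained by deleting the row(s) and column(s) of the central vertices) to pin down that at least $|G|-4$ of the eigenvalues equal $-1$ and to get coarse bounds on the remaining ones, and only then uses the sign checks; you bypass Interlacing entirely, relying on the factorization and on the observation that $f(-1)\neq 0$ to get multiplicity exactly $|G|-3$, and on the three sign changes of $f$ at your test points to place the remaining roots. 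Since a cubic has at most three roots, your three disjoint sign-change intervals already account for all of them, so the Interlacing step is in fact redundant and your streamlined argument is a mild simplification of the paper's.
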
 
\begin{proof}(i) For the group $D_{2n},$ let  $C_1=\{ba^i: 0\leq i \leq n-1\}$ and $C_2=\langle a \rangle -\{e\}$. As $n$ is odd, so $C_1$ is the order $2$ equivalence class and hence the induced subgraph of $\Delta^o(D_{2n})$ with vertex set $C_1$ is isomorphic to $K_n$. Also the induced subgraph of $\Delta^o(D_{2n})$ with vertex set $C_2$ is isomorphic to $K_{n-1}$ as $C_2$ is a subset of the cyclic group $\langle a \rangle$. There is no edge between $C_1$ and $C_2$ in $\Delta^o(D_{2n})$ as no element of $C_1$ commutes with any element of $C_2$. Since $e$ is adjacent with all other element of $D_{2n}$ so $\Delta^o(D_{2n})$ is isomorphic to the generalized join graph $K_{1,2}[K_1, K_{n-1}, K_n]$. Then by Corollary \ref{Spec}, we get
			\[\chi(A(\Delta^o(D_{2n})), x) ~= (x+1)^{2n - 3} [x (x -n + 2)(x - n+1) -(n-1)(x- n + 1) - n(x-n + 2)] \]
			\[ ~~~~~~~~~= (x+1)^{2n - 3} [x^3 - x^2 (2n -3) + x (n^2 - 5n + 3) + (2n^2 - 4n + 1)].\]
			
By deleting the first row and first column of $A(\Delta^o(D_{2n}))$, we obtain a block diagonal matrix $B$ whose two blocks are the adjacency matrices of $K_{n-1}$ and $K_n$, respectively. Note that the eigenvalues of the matrix $B$ are $\beta_1 = \beta_2 = \cdots = \beta_{2n -3} = -1, \beta_{2n-2} = n -2$ and $\beta_{2n-1} = n -1$. Let $\lambda_1\leq \lambda_2\leq \cdots \leq \lambda_{2n-1}\leq \lambda_{2n}$ be the eigenvalues of $A(\Delta^o(D_{2n}))$. Then by Interlacing theorem, we get \[\lambda_1 \leq \lambda_2 = \lambda_3 = \cdots = \lambda_{2n - 3} = -1 \leq \lambda_{2n - 2} \leq n - 2 \leq \lambda_{2n - 1} \leq n - 1 \leq \lambda_{2n}.\]
			
Let $f(x)=x^3 - x^2 (2n -3) + x (n^2 - 5n + 3) + (2n^2 - 4n + 1)$. Then $f(-1)=n^2-n>0$ and $f(-2)=-(2n+1)<0$ as $n\geq 3$. Since the characteristic polynomial of  $A(\Delta^o(D_{2n}))=(x+1)^{2n-3}f(x)$, then by Intermediate value theorem we get  $\lambda_{2n - 2} = -1$ and $-2 <\lambda_1 < -1$. It can be checked that $f(n-2)=n-1>0,f(n-1)=-n<0,f(n)=-n+1<0$ and $f(n+1)=n+8>0$.  Thus the result follows from Intermediate value theorem.\\

\noindent(ii) For the group $Q_{4n},$ let $C_1=\{e,a^n\}, C_2=\langle a \rangle -\{e, a^n\}$ and $C_3=\{ba^i: 0\leq i \leq 2n-1\}.$ As $n$ is odd, so $C_3$ is the order $4$ equivalence class and hence the induced subgraph of $\Delta^o(Q_{4n})$ with vertex set $C_3$ is isomorphic to $K_{2n}.$ The induced subgraph of $\Delta^o(Q_{4n})$ with vertex set $C_2$ is isomorphic to $K_{2n-2}$ as $C_2$ is a subset of the cyclic group $\langle a \rangle$. There is no edge between $C_2$ and $C_3$ in $\Delta^o(D_{2n})$ as no element of $C_2$ commutes with any element of $C_3$. Also $a^n$ is the only element of order $2$ and it commutes with all other element of $Q_{4n}.$ So $\Delta^o(Q_{4n})$ is isomorphic to the generalized join graph $K_{1,2}[K_2, K_{2n-2}, K_{2n}]$ as $e$ is adjacent with all other elements. Then by Corollary \ref{Spec}, we get
			\[\chi(A(\Delta^o(Q_{4n})),x) = (x+1)^{4n - 3} [(x-1)(x -2n + 3)(x - 2n+1) -4(n -1)(x- 2n + 1) - 4n(x-2n + 3)] \]		
			\[ ~~~~~~~~~~~~~~~~ = (x+1)^{4n - 3} [x^3 - x^2 (4n -3) + x (4n^2 - 12n + 3) + (12n^2 - 16n + 1)].\]
By deleting the first two rows and first two columns of $A(\Delta^o(Q_{4n}))$, we obtain a block diagonal matrix $B$ whose two blocks are the adjacency matrices of $K_{2n-2}$ and $K_{2n}$, respectively.  The eigenvalues of the matrix $B$ are $\beta_1 = \beta_2 = \cdots = \beta_{4n -4} = -1, \beta_{4n-3} = 2n -3$ and $\beta_{4n-2} = 2n -1$. Let $\lambda_1\leq \lambda_2\leq \cdots \leq \lambda_{4n-1}\leq \lambda_{4n}$ be the eigenvalues of $A(\Delta^o(D_{2n}))$. Then by Interlacing theorem, we get $2n-3=	\beta_{4n-3}\leq \lambda_{4n-1}$ and $2n-1=	\beta_{4n-2}\leq \lambda_{4n}.$		
			
Let $f(x)=x^3 - x^2 (4n -3) + x (4n^2 - 12n + 3) + (12n^2 - 16n + 1).$ Then $f(-2)=4n^2-8n-1>0$ and $f(-3)=-8(2n+1)<0$ as $n\geq 3$.	 Since the characteristic polynomial of  $A(\Delta^o(Q_{4n}))=(x+1)^{4n-3}f(x)$, then by Intermediate value theorem we get  $-3 <\lambda_1 < -2$ and $\lambda_2= \lambda_3= \cdots = \lambda_{4n-2}=-1.$ It can be checked that $f(2n-3)=8(n-1)>0,f(2n-2)=-2n-1<0,f(2n+1)=-8n+8<0, f(2n+2)=-2n+27$ and $f(2n+3)=8(n+8)>0.$ As $f(2n+2)>0$ for $3\leq n\leq 13$ and $f(2n+2)<0$ for $n\geq 15$ so  the result follows from Intermediate value theorem.\\

\noindent(iii) For the group  $\mathbb Z_p \rtimes \mathbb Z_q$, let $C_2=\langle b \rangle -\{e\}$ and $C_3=\{b^ia^j: 0\leq i \leq p-1,1\leq j \leq q-1 \}.$ Then $C_2$ is the order $p$ equivalence class and $C_3$ is the order $q$ equivalence class. Hence the induced subgraph of $\Delta^o(\mathbb Z_p \rtimes \mathbb Z_q)$ with vertex set $C_2$ and $C_3$ are isomorphic to $K_{p-1}$ and $K_{pq-p},$ respectively. There is no edge between $C_2$ and $C_3$ in $\Delta^o(\mathbb Z_p \rtimes \mathbb Z_q)$ as no element of $C_2$ commutes with any element of $C_3$. Since $e$ is adjacent with all other element of  $\mathbb Z_p \rtimes \mathbb Z_q$ so $\Delta^o(\mathbb Z_p \rtimes \mathbb Z_q)$ is isomorphic to the generalized join graph $K_{1,2}[K_1, K_{p-1}, K_{pq-p}]$. Then by Corollary \ref{Spec}, we get		
			\[\chi(A(\Delta^o(G)),x) = (x+1)^{pq - 3} [x (x -p + 2)(x - pq + p + 1) -(p - 1)(x- pq + p + 1) - (pq - p)(x- p + 2)] \]
			\[~ = (x+1)^{pq- 3} [x^3 - x^2(pq - 3) + x(p^2q - 3pq - p^2 + p + 3) + (2p^2q - 3pq - 2p^2 + 2p + 1)]. \]
			
The rest part of the proof is similar to the proof of $(i).$
\end{proof} 
\begin{theorem}\label{thm:lap}
Let $n\geq 3$ be odd and let $p$ and $q$ be distinct primes with $q|p-1$. Then 
\begin{enumerate}[\rm (i)]
\item  $ \sigma(L(\Delta^o(D_{2n}))) =  \displaystyle \begin{pmatrix}
	0 & 1 & n & n+1 & 2n\\
	1 & 1 & n-2 & n-1 & 1\\
			\end{pmatrix}$;

\item  $ \sigma(L(\Delta^o(Q_{4n}))) = \displaystyle \begin{pmatrix}
	0 & 2 & 2n & 2n+2 & 4n\\
	1 & 1 & 2n-3 & 2n-1 & 2\\
			\end{pmatrix}$;

\item  $ \sigma(L(\Delta^o(\mathbb Z_p \rtimes \mathbb Z_q))) =  \displaystyle \begin{pmatrix}
	0 & 1 & p & pq-p+1 & pq\\
	1 & 1 & p-2 & pq-p-1 & 1\\
			\end{pmatrix}$.
\end{enumerate}
\end{theorem}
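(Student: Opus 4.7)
The plan is to apply Corollary \ref{Spec_Lap} directly, since the hard work, namely identifying each $\Delta^o(G)$ as a star generalized join of complete graphs, was already done in the proof of Theorem \ref{thm:adj}. From that proof, I can reuse the isomorphisms
\[
\Delta^o(D_{2n}) \cong K_{1,2}[K_1, K_{n-1}, K_n], \quad \Delta^o(Q_{4n}) \cong K_{1,2}[K_2, K_{2n-2}, K_{2n}],
\]
\[
\Delta^o(\mathbb Z_p \rtimes \mathbb Z_q) \cong K_{1,2}[K_1, K_{p-1}, K_{pq-p}],
\]
valid because $n$ is odd in the first two cases. So the whole argument collapses to feeding the triples $(n_1,n_2,n_3)$ into Corollary \ref{Spec_Lap}.

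For each case, I would compute $N_1 = n-n_1$ (the total number of vertices minus the central block) and $N_i = n_1$ for $i \geq 2$, then plug into
\[
\chi(L(\Gamma), x) = x(x-n)(x-n_1)^{k-2}\prod_{i=1}^{k}(x-N_i-n_i)^{n_i-1}
\]
with $k=3$. For $D_{2n}$ this produces the factors $x$, $x-2n$, $(x-1)^{1}$, $(x-n)^{n-2}$, $(x-n-1)^{n-1}$ (the term $(x-2n)^{0}$ from $i=1$ vanishes), giving the stated spectrum. For $Q_{4n}$ the factors are $x$, $x-4n$, $(x-2)^{1}$, $(x-4n)^{1}$, $(x-2n)^{2n-3}$, $(x-2n-2)^{2n-1}$, and collecting the two copies of $4n$ yields the claim. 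For $\mathbb Z_p \rtimes \mathbb Z_q$ the factors are $x$, $x-pq$, $(x-1)^{1}$, $(x-p)^{p-2}$, $(x-pq+p-1)^{pq-p-1}$.

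A quick dimension check in each case confirms the multiplicities sum to $2n$, $4n$, and $pq$ respectively, so no eigenvalue is missed. There is essentially no obstacle here: the only thing to watch for is that the factor $(x-N_1-n_1)^{n_1-1}$ becomes trivial whenever $n_1=1$ (so it contributes nothing for $D_{2n}$ and $\mathbb Z_p \rtimes \mathbb Z_q$) but contributes an extra copy of $4n$ when $n_1=2$ in the $Q_{4n}$ case, accounting for the multiplicity $2$ of the eigenvalue $4n$ there. Collecting terms in each of the three cases then yields exactly the spectra stated.
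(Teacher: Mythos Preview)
Your proposal is correct and follows exactly the paper's own approach: the paper's proof simply cites the three isomorphisms from the proof of Theorem~\ref{thm:adj} and then invokes Corollary~\ref{Spec_Lap}, without even writing out the intermediate factorizations you supplied. Your added computations (and the observation about the $n_1=1$ versus $n_1=2$ cases governing the multiplicity of the top eigenvalue) are accurate and in fact more detailed than what the paper records.
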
 
\begin{proof}
By the proof Theorem \ref{thm:adj}, we have $\Delta^o(D_{2n}) \cong K_{1,2}[K_1, K_{n-1}, K_n]$, $\Delta^o(Q_{4n})\cong K_{1,2}[K_2, K_{2n-2}, K_{2n}]$ and $\Delta^o(\mathbb Z_p \rtimes \mathbb Z_q)\cong K_{1,2}[K_1, K_{p-1}, K_{pq-p}]$. Then the result follows from Corollary \ref{Spec_Lap}.
\end{proof}

\subsection{Conjugacy super commuting graph}

In this subsection, our aim is to study the conjugacy super commuting graph $\Delta^c(G)$ , where $G\in \{D_{2n},Q_{4n}, \mathbb Z_p \rtimes \mathbb Z_q \}.$ 

\begin{theorem}\label{Ad_D_2n}
Let $n\geq 3$ be a positive integer. Then $\Delta^c(D_{2n})\cong K_{1,3}[K_2, K_{\frac{n}{2}}, K_{\frac{n}{2}}, K_{n-2}]$ if $n$ is even and $\Delta^c(D_{2n})\cong K_{1,2}[K_1, K_{n-1}, K_n]$ if $n$ is odd.
\end{theorem}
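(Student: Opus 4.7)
The plan is to enumerate the conjugacy classes of $D_{2n}$, examine which pairs of classes have commuting representatives, group the classes into maximal blocks with uniform external adjacencies, and finally recognize the resulting structure as a generalized join via Theorem \ref{obs}. The two universal identities driving the whole argument come from $ab = ba^{-1}$: one has $a^i(ba^j) = ba^{j-i}$ and $(ba^j)a^i = ba^{j+i}$, so $a^i$ commutes with any reflection iff $a^i \in Z(D_{2n})$; and $(ba^j)(ba^k) = a^{k-j}$, so two reflections $ba^j, ba^k$ commute iff $n \mid 2(k-j)$.

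For the $n$ odd case, the conjugacy classes of $D_{2n}$ are $\{e\}$, the pairs $\{a^i, a^{-i}\}$ for $1 \le i \le (n-1)/2$, and the single reflection class $\{ba^i : 0 \le i \le n-1\}$. I would form three blocks $C_1 = \{e\}$, $C_2 = \langle a \rangle \setminus \{e\}$, and $C_3 = \{ba^i : 0 \le i \le n-1\}$. Each block induces a clique in $\Delta^c(D_{2n})$: $C_2$ because any two rotations commute (so across distinct rotation classes a commuting representative always exists), and $C_3$ as a single conjugacy class. The identity is adjacent to every vertex, and since $Z(D_{2n}) = \{e\}$ for $n$ odd, the first identity rules out edges between $C_2$ and $C_3$. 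The compressed graph is thus $K_{1,2}$ with $C_1$ at the center, and Theorem \ref{obs} yields $\Delta^c(D_{2n}) \cong K_{1,2}[K_1, K_{n-1}, K_n]$.

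For the $n$ even case, the conjugacy classes include $\{e\}$, $\{a^{n/2}\}$, the pairs $\{a^i, a^{-i}\}$ for $1 \le i \le n/2 - 1$, and the two reflection classes $R_0 = \{ba^{2i} : 0 \le i \le n/2-1\}$ and $R_1 = \{ba^{2i+1} : 0 \le i \le n/2-1\}$. I would group them into $C_1 = Z(D_{2n}) = \{e, a^{n/2}\}$, $C_2 = R_0$, $C_3 = R_1$, and $C_4 = \langle a \rangle \setminus Z(D_{2n})$, of sizes $2$, $n/2$, $n/2$, and $n-2$ respectively. Each block is a clique in $\Delta^c(D_{2n})$ (using pairwise commutation of rotations for $C_4$); $C_1$ is joined to every other block by centrality; and the first identity prevents edges between $C_4$ and $C_2 \cup C_3$ since non-central rotations do not commute with any reflection. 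Combined with the analogous reflection-reflection check separating $R_0$ from $R_1$, the compressed graph is $K_{1,3}$ centered at $C_1$, and Theorem \ref{obs} gives $\Delta^c(D_{2n}) \cong K_{1,3}[K_2, K_{n/2}, K_{n/2}, K_{n-2}]$.

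The main obstacle is the last separation, between $R_0$ and $R_1$ in the $n$ even case: by the reflection identity, showing that no pair $(ba^{2i}, ba^{2j+1})$ commutes reduces to checking that $n \nmid 2(2j+1-2i)$ for all $i, j$. This arithmetic step, together with the careful enumeration of conjugacy classes in each parity and the justification that within each grouped block all constituent conjugacy classes are mutually joined in the compressed graph (so that merging them into a single complete block is legitimate), forms the crux of the proof.
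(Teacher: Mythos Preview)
Your outline matches the paper's proof closely: list the conjugacy classes, group them into the blocks $Z(D_{2n})$, the reflection class(es), and the non-central rotations, check cross-adjacencies, and invoke Theorem~\ref{obs}. The odd case is complete and identical to the paper's. In the even case you rightly single out the $R_0$--$R_1$ separation as the crux and reduce it to the divisibility condition $n\nmid 2(2j+1-2i)$ for all $i,j$.

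That arithmetic check, however, fails when $n\equiv 2\pmod 4$. Writing $n=2m$ with $m$ odd, take $i=0$ and $j=(m-1)/2$; then $2j+1-2i=m$ and $2(2j+1-2i)=n$. Equivalently, $b\in R_0$ commutes with $a^{n/2}b$ (because $a^{n/2}$ is central), and $a^{n/2}b\in R_1$ precisely when $n/2$ is odd. Hence for $n\equiv 2\pmod 4$ the two reflection classes \emph{are} joined in $\Delta^c(D_{2n})$, so the claimed isomorphism to $K_{1,3}[K_2,K_{n/2},K_{n/2},K_{n-2}]$ cannot hold (one obtains $K_{1,2}[K_2,K_n,K_{n-2}]$ instead; already for $n=6$ a reflection has degree $7$, not $4$). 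The paper's own proof makes the same unjustified assertion (``no element of $C_{j_1}$ commutes with any element of $C_{j_2}$''), so your proposal correctly locates---but cannot close---a gap that is already present in the source; the argument as written is valid only for $n$ odd or $n\equiv 0\pmod 4$.
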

\begin{proof} First let $n=2k$ with $k\geq 2.$ Then it is known that the conjugacy classes of Dihedral group $D_{2n}$ are
\[\{e\}, \{a^k\}, \{a^{\pm 1}\}, \{a^{\pm 2}\}, \ldots, \{a^{\pm (k-1)}\}, \{a^{2i}b : 1 \leq i \leq k\}, ~ {\rm and}~ \{a^{2i - 1}b : 1 \leq i \leq k\}.\]

Take $C_1=\{a^{2i}b : 1 \leq i \leq k\}, C_2=\{a^{2i - 1}b : 1 \leq i \leq k\} ~ {\rm and}~ C_3=\langle a \rangle - \{e, a^k\}.$ Then for $j\in \{1,2,3\}$, the induced subgraph of $\Delta^c(D_{2n})$ with vertex set $C_j$ is complete. For $j_1,j_2\in \{1,2,3\}$ with $j_1\neq j_2$, note that no element of $C_{j_1}$ commutes with any element of $C_{j_2}$ and hence in $\Delta^c(D_{2n})$, no vertex of $C_{j_1}$ is adjacent with any vertex of $C_{j_2}$. As $e$ and $a^k$ commute with every element of $D_{2n}$, so $\Delta^c(D_{2n})\cong K_{1,3}[K_2, K_{\frac{n}{2}}, K_{\frac{n}{2}}, K_{n-2}]$.

Let $n = 2k + 1$ with $k\geq 1$. Then the conjugacy classes of $D_{2n}$ are
\[\{e\}, \{a^{\pm 1}\}, \{a^{\pm 2}\}, \ldots, \{a^{\pm k}\}, ~ {\rm and}~ \{a^{i}b : 1 \leq i \leq n\}.\]
Take $C'_1=\langle a \rangle - \{ e \}$ and $C'_2=\{a^{i}b : 1 \leq i \leq n\}$. Then the induced subgraphs of $\Delta^c(D_{2n})$ with vertex set $C'_1$ and $C'_2$ are complete graphs. As no element of $C'_1$ commutes with any element of $C'_2$, we have $\Delta^c(D_{2n})\cong K_{1,2}[K_1, K_{n-1}, K_n]$.
\end{proof}
If $n$ is odd then by Theorem \ref{Ad_D_2n}, $\Delta^c(D_{2n})=\Delta^o(D_{2n})$ and in this case the spectrum of $\Delta^c(D_{2n})$ are already discussed. For $n=2m \; (m\geq 2)$, we will relate $\Delta^c(D_{2n})$ and $\Delta^c(Q_{4m})$ in the next result.

\begin{theorem}\label{Ad_Q4n}
Let $n\geq 2$ be a positive integer. Then $\Delta^c(Q_{4n})\cong K_{1,3}[K_2, K_n, K_n, K_{2n-2}].$ 
\end{theorem}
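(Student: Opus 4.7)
The plan is to follow the strategy of Theorem \ref{Ad_D_2n}, exploiting the conjugacy-class structure of $Q_{4n}$ to exhibit the desired generalized join decomposition. Using the defining relations $a^{2n}=e$, $b^2=a^n$, and $ab=ba^{-1}$, the conjugation action gives $a \cdot (ba^i) \cdot a^{-1} = ba^{i-2}$ and $b \cdot a^i \cdot b^{-1} = a^{-i}$, so that the conjugacy classes of $Q_{4n}$ are $\{e\}$, $\{a^n\}$, $\{a^i, a^{-i}\}$ for $1 \le i \le n-1$, together with the two $b$-type classes $B_0 = \{ba^{2j} : 0 \le j \le n-1\}$ and $B_1 = \{ba^{2j+1} : 0 \le j \le n-1\}$. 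In particular, $Z(Q_{4n}) = \{e, a^n\}$.

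Having identified the classes, I would introduce the partition $C_0=\{e, a^n\}$, $C_1 = B_0$, $C_2 = B_1$, $C_3 = \langle a \rangle \setminus \{e, a^n\}$, of sizes $2,\,n,\,n,\,2n-2$. The goal is to show that this partition realizes the generalized join $K_{1,3}[K_2, K_n, K_n, K_{2n-2}]$, with $C_0$ placed at the center of the star $K_{1,3}$ and $C_1, C_2, C_3$ at the three leaves. This reduces to three claims: (i) each $C_i$ induces a complete subgraph of $\Delta^c(Q_{4n})$; (ii) every element of $C_0$ is adjacent in $\Delta^c(Q_{4n})$ to every element of $C_j$ for $j=1,2,3$; and (iii) for $1 \le i < j \le 3$, no element of $C_i$ is adjacent in $\Delta^c(Q_{4n})$ to any element of $C_j$.

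Parts (i) and (ii) follow easily. For (i), $C_1$ and $C_2$ are each a single conjugacy class (automatically complete in the super graph), $C_3 \subseteq \langle a \rangle$ is contained in an abelian subgroup (so any two elements commute, hence are adjacent in $\Delta \subseteq \Delta^c$), and the two elements of $C_0=Z(Q_{4n})$ commute. For (ii), $e$ and $a^n$ are central, so each commutes with every element of $Q_{4n}$; that already supplies a commuting pair of representatives in the relevant conjugacy classes.

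The main obstacle is part (iii), which I would handle by direct multiplication in $Q_{4n}$. For the pair $C_3$ versus $C_1 \cup C_2$: the relation $ab = ba^{-1}$ gives $a^k \cdot ba^\ell = ba^{\ell-k}$ while $ba^\ell \cdot a^k = ba^{\ell+k}$, and equality forces $a^{2k}=e$, i.e.\ $k \equiv 0 \pmod n$; such $k$ are precisely the ones excluded from $C_3$. For $C_1$ versus $C_2$: using $b^2 = a^n$, expand $(ba^{2j})(ba^{2j'+1}) = a^{n + 2j' + 1 - 2j}$ and $(ba^{2j'+1})(ba^{2j}) = a^{n + 2j - 2j' - 1}$; equality reduces to the congruence $2(j'-j)+1 \equiv 0 \pmod n$, a delicate parity condition one must rule out to conclude that no commuting representatives exist across $B_0$ and $B_1$. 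Granted the verifications (i)--(iii), the definition of the generalized join immediately yields $\Delta^c(Q_{4n}) \cong K_{1,3}[K_2, K_n, K_n, K_{2n-2}]$, completing the argument.
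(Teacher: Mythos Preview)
Your approach is exactly the paper's: list the conjugacy classes of $Q_{4n}$, set $C_0=\{e,a^n\}$, $C_1=B_0$, $C_2=B_1$, $C_3=\langle a\rangle\setminus\{e,a^n\}$, and verify the required adjacency pattern. The paper's own proof merely records the conjugacy classes and then says ``the result follows by using a similar argument as given in Theorem~\ref{Ad_D_2n},'' so your outline is in fact more explicit than the original.

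However, the step you label ``a delicate parity condition one must rule out'' is a genuine gap, and it cannot be closed for all $n\geq 2$. Your congruence $2(j'-j)+1\equiv 0\pmod n$ is correct; when $n$ is even the left side is odd and the congruence is impossible, so claim~(iii) holds. But when $n$ is odd the congruence \emph{is} solvable: take $j'-j\equiv (n-1)/2\pmod n$, giving $2(j'-j)+1\equiv n\equiv 0\pmod n$. Concretely, $b\in B_0$ always commutes with $ba^{n}=b\cdot b^{2}=b^{3}$, and $ba^{n}\in B_1$ exactly when $n$ is odd. Hence for odd $n$ there \emph{are} commuting representatives across $B_0$ and $B_1$, every vertex of $C_1$ is adjacent to every vertex of $C_2$ in $\Delta^c(Q_{4n})$, and the claimed decomposition $K_{1,3}[K_2,K_n,K_n,K_{2n-2}]$ fails; one obtains instead $K_{1,2}[K_2,K_{2n-2},K_{2n}]=\Delta^o(Q_{4n})$. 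Your argument (and the paper's proof by analogy with the dihedral case, which glosses over this point) goes through only for even $n$.
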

\begin{proof}
It is easy to verify that the conjugacy classes of $Q_{4n}$ are 
\[\{e\}, \{a^n\}, \{a^{\pm 1}\}, \{a^{\pm 2}\}, \ldots, \{a^{\pm (n-1)}\}, \{a^{2i}b : 1 \leq i \leq n\}, ~ {\rm and}~ \{a^{2i - 1}b : 1 \leq i \leq n\}.\]
The result follows by using a similar argument as given in Theorem \ref{Ad_D_2n}.
\end{proof}
By Theorems \ref{Ad_D_2n} and \ref{Ad_Q4n}, for $n=2m \;(m\geq 2)$, $\Delta^c(D_{2n}) \cong \Delta^c(Q_{4m}).$	By Corollary \ref{Spec_Lap}, we have $\displaystyle \sigma(L(\Delta^c(D_{4m})))=\sigma(L(\Delta^c(Q_{4m})))=\begin{pmatrix}
0 & 2 & m+2 & 2m & 4m\\
1 & 1 & 2m-2 & 2m-3 & 2\\
\end{pmatrix}$ and using Corollary \ref{Spec}, we get $\chi(A(\Delta^c(D_{4m})),x)=\chi(A(\Delta^c(Q_{4m})),x)=(x+1)^{4m-4}(x-m+1)[x^3-(3m-3)x^2+(2m^2-10m+3)x+(10m^2-15m+1)].$
\begin{theorem}\label{semi_Ad}
Let $p$ and $q$ be distinct primes with $q \mid p-1$. Then $\Delta^c(\mathbb Z_p \rtimes \mathbb Z_q)\cong K_{1,2}[K_1, K_{p-1}, K_{pq-p}].$
\end{theorem}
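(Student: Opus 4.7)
The plan is to mimic the proof of Theorem \ref{Ad_D_2n}, by first determining the conjugacy classes of $G = \mathbb{Z}_p \rtimes \mathbb{Z}_q$ and then collecting them into three super-blocks whose mutual adjacencies realize the $K_{1,2}$ pattern. Concretely, I would first establish that the conjugacy classes of $G$ are: the singleton $\{e\}$; a family of $(p-1)/q$ classes of size $q$ partitioning $\langle b\rangle\setminus\{e\}$ (these come from the action $aba^{-1} = b^m$, which permutes $\{b,b^2,\ldots,b^{p-1}\}$ via multiplication by $m$, an element of order $q$ in $\mathbb{Z}_p^{*}$); and $q-1$ classes of size $p$, namely $\{b^ka^j : 0\le k\le p-1\}$ for each $j = 1,\ldots,q-1$. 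The latter follows from a direct computation $b^{i}a^{j}b^{-i} = b^{i(1-m^{j})}a^{j}$, which, since $m^{j}\not\equiv 1\pmod p$, shows that conjugation by powers of $b$ sweeps out the full coset $\langle b\rangle a^{j}$. A simple count $1+(p-1)+p(q-1)=pq$ confirms this accounts for every element.

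Next, set $C_{1}=\{e\}$, $C_{2}=\langle b\rangle\setminus\{e\}$, and $C_{3}=\{b^{i}a^{j}:0\le i\le p-1,\,1\le j\le q-1\}$. I would argue that the subgraph of $\Delta^{c}(G)$ induced on each $C_{r}$ is complete. For $C_{2}$, two vertices in the same conjugacy class are joined by definition; for two vertices $b^{i},b^{k}$ in different classes, any elements of their respective classes still lie inside the abelian subgroup $\langle b\rangle$ and hence commute, so they are adjacent in $\Delta^{c}(G)$. The same reasoning handles $C_{3}$: elements in the same class are adjacent by definition, and representatives $a^{j},a^{l}$ of two different classes both lie in the abelian subgroup $\langle a\rangle$, so they commute and the corresponding super-edge exists.

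For the cross edges, $e$ centralizes every element of $G$, so $C_{1}$ is joined to every vertex of $C_{2}\cup C_{3}$. The crucial step, and the only one requiring a small calculation, is to show there is \emph{no} edge between $C_{2}$ and $C_{3}$ in $\Delta^{c}(G)$. Since the conjugacy class of any $b^{i}\in C_{2}$ lies entirely in $\langle b\rangle\setminus\{e\}$ and the conjugacy class of any $b^{k}a^{l}\in C_{3}$ equals $\{b^{s}a^{l}:0\le s\le p-1\}$, it suffices to check that no non-identity power $b^{u}$ commutes with any $b^{s}a^{l}$. This reduces to the congruence $u(1-m^{l})\equiv 0\pmod p$, which, since $m^{l}\not\equiv 1\pmod p$ for $1\le l\le q-1$, forces $u\equiv 0\pmod p$, contradicting $b^{u}\ne e$. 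The expected obstacle is really just this relation-chasing computation inside the semidirect product; once it is in place, the $\mathcal{R}_{c}$-compressed graph has three vertices with $C_{1}$ adjacent to both $C_{2}$ and $C_{3}$ and no edge between $C_{2}$ and $C_{3}$, i.e.\ it is $K_{1,2}$ with centre $C_{1}$. Applying Theorem \ref{obs} then yields $\Delta^{c}(\mathbb{Z}_p\rtimes\mathbb{Z}_q)\cong K_{1,2}[K_{1},K_{p-1},K_{pq-p}]$, as claimed.
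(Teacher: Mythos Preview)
Your proposal is correct and follows essentially the same route as the paper: determine the conjugacy classes, coalesce them into the three blocks $\{e\}$, $\langle b\rangle\setminus\{e\}$, and $G\setminus\langle b\rangle$, verify that each block induces a complete subgraph and that the latter two blocks have no cross-edges, and read off the $K_{1,2}$-generalized join; in fact your verification that no nontrivial $b^{u}$ commutes with any $b^{s}a^{l}$ is more explicit than what the paper writes out. One small slip: your three blocks $C_{1},C_{2},C_{3}$ are unions of conjugacy classes, not single classes, so the $\mathcal{R}_{c}$-compressed graph actually has $1+(p-1)/q+(q-1)$ vertices rather than three, and Theorem~\ref{obs} does not apply with $k=3$ as stated; but this is harmless, since your argument already establishes the generalized-join structure directly from the definition, so the final citation can simply be dropped.
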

\begin{proof} We have $\mathbb Z_p \rtimes \mathbb Z_q = \langle a, b : b^p = a^q = e, aba^{-1} = b^m \rangle$ with $m^q \equiv 1 ( {\rm mod}~ p)$ and   $m \not\equiv 1 ( {\rm mod}~ p).$ As $ab = b^m a$, we get
\[ab^i = b^{mi} a, ~ a^i b = b^{m^i} a^i, ~ {\rm and} ~ a^{i}b^j = b^{mj}a^i. \]

Let $x\in \mathbb Z_p \rtimes \mathbb Z_q.$ Then $x = b^i a^j$ for some $ 0 \leq i \leq p-1$ and $0 \leq j \leq q-1$. Let $CL(x)$ be the conjugacy class of $x$ and let $y\in CL(x).$ Then by using the above equality, we have 
$$y=b^ka^l(x)a^{-l}b^{-k} = b^k a^l b^i a^{j-l}b^{-k} = b^k b^{m^li} a^j b^{-k} = b^{m^li- m^jk + k} a^j.$$

For $1\leq j \leq q-1$, the normalizer of $a^j$, $N(a^j)= \langle a \rangle$. Then $|CL(a_j)|=\dfrac{|\mathbb Z_p \rtimes \mathbb Z_q|}{|N(a^j)|}=p$. So, $1\leq j \leq q-1$, $CL(a_j) = \{b^l a^j : 0 \leq i \leq p\}.$ Let $C_j=CL(a_j)$ and let $j_1,j_2\in \{1,2,\dots, q-1\}$ with $j_1\neq j_2$. Then every element of $C_{j_1}$ is adjacent with every element of $C_{j_2}$ in $\Delta^c(\mathbb Z_p \rtimes \mathbb Z_q)$ as $a_{j_1}a_{j_2}=a_{j_2}a_{j_1}.$ Let $H=C_1\cup C_2 \cup \cdots \cup C_{q-1}$. Then the induced subgraph of $\Delta^c(\mathbb Z_p \rtimes \mathbb Z_q)$ with vertex set $H$ is a complete graph on $pq-p$ vertices. Let $H'=  \langle a \rangle -\{e\}$. Then the induced subgraph of $\Delta^c(\mathbb Z_p \rtimes \mathbb Z_q)$ with vertex set $H'$ is a complete graph on $p-1$ vertices and no vertex of $H'$ is adjacent with any vertex of $H$. Sine $e$ is adjacent with every vertex , we have $\Delta^c(\mathbb Z_p \rtimes \mathbb Z_q)\cong K_{1,2}[K_1, K_{p-1}, K_{pq-p}].$
\end{proof}
We have  $\Delta^c(\mathbb Z_p \rtimes \mathbb Z_q)= \Delta^o(\mathbb Z_p \rtimes \mathbb Z_q)$ and both the adjacency and Laplacian spectrum of $\Delta^o(\mathbb Z_p \rtimes \mathbb Z_q)$  are already discussed in Theorem \ref{thm:adj} and Theorem \ref{thm:lap}, respectively.
\vskip 0.5cm
\noindent{\bf Conflict of interest:} On behalf of all authors, the corresponding author states that there is no conflict of interest.

\vskip .5cm

\noindent{\bf Addresses}: Sandeep Dalal$^*$, Sanjay Mukherjee$^{\dagger}$, Kamal Lochan Patra$^{\dagger}$\\

\noindent $^*$Department of Mathematics\\
 Punjab Engineering College (Deemed to be University)\\
 Chandigarh-160012, India.\\

\noindent $^{\dagger}$School of Mathematical Sciences\\
National Institute of Science Education and Research Bhubaneswar\\
An OCC of Homi Bhabha National Institute\\
Jatni, Khurda, Odisha, 752050, India.\\

\noindent{\bf E-mails}: deepdalal10@gmail.com, sanjay.mukherjee@niser.ac.in, klpatra@niser.ac.in
\end{document}